\documentclass[a4paper,12pt]{amsart}
  \usepackage{amssymb,amsthm}
 \setlength{\textwidth}{15cm}
\setlength{\oddsidemargin}{1cm}
\setlength{\evensidemargin}{1cm}
\setlength{\textheight}{21cm}
\setlength{\parskip}{2mm}
\setlength{\parindent}{0em}
\setlength{\headsep}{1.5cm}
\usepackage{graphicx,color}        
 \usepackage{tikz}
\usetikzlibrary{automata,positioning,arrows}

\usepackage{url} 

\newtheorem{theorem}{Theorem}[section]
\newtheorem{corollary}[theorem]{Corollary}
\newtheorem{proposition}[theorem]{Proposition}
\newtheorem{lemma}[theorem]{Lemma}

\theoremstyle{definition}
\newtheorem{definition}[theorem]{Definition}

\newtheorem{remark}[theorem]{Remark}

\def\r{\mathbb R}


\begin{document}

\title[]{A relation between cylindrical critical points of Willmore-type energies, weighted areas and vertical potential energies}
\author{Rafael L\'opez}
\address{Departamento de Geometr\'{\i}a y Topolog\'{\i}a\\  Universidad de Granada.  18071 Granada, Spain}
\email{rcamino@ugr.es}
\author{\'Alvaro P\'ampano }
\address{Department of Mathematics and Statistics, Texas Tech Universityy, Lubbock, TX 79409 USA}
\email{alvaro.pampano@ttu.edu}
\keywords{elastic curve, singular minimal hypersurface, stability, stationary hypersurface, weighted area, Willmore hypersurface.}
  \subjclass{58E12, 35J60, 53A10}
%
%

\begin{abstract}This paper considers energies associated with three different physical scenarios and obtains relations between them in a particular case. The first family of energies consists of the Willmore-type energies involving the integral of powers of the mean curvature which extends the Willmore and Helfrich energies. The second family of energies is the area functionals arising in weighted manifolds, following the theory developed by Gromov, when the density is a power of the height function. The third one is the free energies of a fluid deposited in a horizontal hyperplane when the potentials depend on the height with respect to this hyperplane. We find relations between each of them when the critical point is a hypersurface of cylindrical type. We prove that the generating curves coincide for the Willmore-type energies without area constraint and for weighted areas, and the similar result holds for the generating curves of Willmore-type energies and of the vertical potential energies, after suitable choices of the physical parameters. In all the cases, generating curves are critical points for a family of energies extending the classical bending energy. In the final section of the paper, we analyze the stability of a liquid drop deposited on a horizontal hyperplane with vertical potential energies. It is proven that if the free interface of the fluid is a graph on this hyperplane, then the hypersurface is stable in the sense that it is a local minimizer of the energy. In fact, we prove that the hypersurface is a global minimizer in the class of all graphs with the same boundary.
\end{abstract}

\maketitle

\section{Introduction and objectives}

This paper considers three families of energy functionals that appear in different physical scenarios, namely, Willmore-type energies, weighted areas whose densities depend on the height function and the free energies of a fluid bulk deposited on a horizontal plane under the action of some vertical potentials.

\subsection{Willmore-type energies}
 
The Willmore-type energies have their origin in the works of Germain and Poisson on the elastic theory of surfaces (\cite{ger,poi}), which was motivated by previous investigations of Euler and the Bernoulli family about one-dimensional models on the vibration of elastic beams. In order to model the bending energy of a thin elastic plate $\Sigma$, Germain proposed to consider energies which are the integral of even and symmetric functions of the principal curvatures of the surface. The simplest non-trivial choices lead to the Willmore energy
$$W[\Sigma]=\int_\Sigma H^2\,dA,$$
where $H$ is the mean curvature of $\Sigma$, a term coined by Germain (\cite{Germain}), and $dA$ is the area element. Although it was previously studied by Blaschke (\cite{B}) among others, the Willmore energy $W$ was named after the famous conjecture proposed by Willmore in 1965 (\cite{wi}). This conjecture was proven in 2014 by Marques and Neves (\cite{mn}).

Later on, in 1973, a remarkable energy was proposed by Helfrich to model lipid bilayers (\cite{He}). A thin bilayer may be modeled by a mathematical surface $\Sigma$ and, in this setting, the Helfrich energy of the surface is defined by
$$H_{a,b,c}[\Sigma]=\int_\Sigma\left(a\left(2H+c\right)^2+bK\right)dA,$$
where $K$ is the Gaussian curvature of the surface, $a,b$ are rigidity constants and $c\in\mathbb{R}$ is the so-called spontaneous curvature. Note that due to the Gauss-Bonnet theorem the total Gaussian curvature term in above energies $H_{a,b,c}$ is just a topological invariant which does not affect the associated Euler-Lagrange equation on the interior of the surface. 

More generally, for a surface $\Sigma$, Willmore-type energies are defined as  
$$W_{p,\mu,\varsigma}[\Sigma]=\int_\Sigma \left(\left(H-\frac{\mu}{2}\right)^p+\varsigma\right)dA,$$
where $p, \mu, \varsigma\in\mathbb{R}$ are constants. The Willmore-type energies $W_{p,\mu,\varsigma}$ extend the Willmore energy $W= W_{2,0,0}$ (\cite{wi}), the constrained Willmore energy $W_{2,0,\varsigma}$ (\cite{ku}), as well as the Helfrich energy $W_{2,\mu,0}$ (\cite{He}), but for the topological invariant term. Regardless of the boundary conditions, critical points of these energies $W_{p,\mu,\varsigma}$ satisfy a fourth order partial differential equation which can be expressed in terms of the mean and Gaussian curvatures of the surfaces as
\begin{equation}\label{eq-willmore}
p\Delta\left(H-\frac{\mu}{2}\right)^{p-1}+2p\left(H-\frac{\mu}{2}\right)^{p-1}\left(2H^2-K\right)-4H\left(\left(H-\frac{\mu}{2}\right)^{p}+\varsigma\right)=0,
\end{equation}
where $\Delta$ is the Laplacian on $\Sigma$ (\cite{to}). Beyond their mathematical interest, Willmore-type energies have been employed in biology to model bilipid membranes in the Canham-Helfrich-Evans models (\cite{Can,Ev,He}) as well as $\beta$-barrels (\cite{Dal,TZA}), to mention a couple.

\subsection{Weighted areas}
  
The second family of energies is motivated by the catenary, which is the curve that describes the shape of a hanging chain. The two-dimensional analogue resembles the shape of a piece of cloth hanged by its own weight. If $(x,y,z)$ denote the Cartesian coordinates of $\mathbb{R}^3$, the piece of cloth is modeled by a surface $\Sigma$, assumed to be included in the upper half-space $z>0$, which is a critical point of the energy functional
$$A_z[\Sigma]=\int_\Sigma z\,dA,$$
among all surfaces with the same boundary and the same area. The energy $A_z$ represents the action of constant gravity on the piece of cloth, measured with respect to the plane $z=0$. Following Dierkes (\cite{bht,di1,di2,dh}), critical points of $A_z$ are called singular minimal surfaces. These surfaces have the lowest center of gravity among all surfaces with the same initial data, extending the known property of the catenary. For this reason, these surfaces are used as models of perfect domes by architects such as the German Frei Otto (\cite{ot}); see also \cite{lo2}. 

One can extend the energy $A_z$ and define the weighted area energies
$$F_{\alpha,\varpi}[\Sigma]=\int_\Sigma z^\alpha\,dA+\varpi\int_\Omega z^\alpha\,dV,$$
where $\alpha,\varpi\in\mathbb{R}$ are constants, $\Omega$ is the $3$-domain between $\Sigma$ and the plane $z=0$, and $dV$ is the volume element on $\r^3$. A critical point of the energies $F_{\alpha,\varpi}$ satisfies the equation
\begin{equation}\label{eq-weighted}
2H=\alpha\frac{\nu _3}{z}+\varpi,
\end{equation}
where $\nu_3$ is the vertical component of the unit normal vector to the surface $\Sigma$. The case $\alpha=0$ corresponds with the well known surfaces of constant mean curvature (minimal surfaces if $\varpi=0$). 

\subsection{Vertical potential energies}
  
The third family of energy functionals considers the free energies of a fluid bulk $\Omega$ deposited in a horizontal plane when this fluid is affected by potentials depending on the height with respect to the supporting plane. Assuming that the vertical potential energies are powers of the height $z$ with respect to the reference plane $z=0$ and under ideal conditions of constant density and incompressibility of the fluid, the free energies of the system are the vertical potential energies 
$$E_{\eta,m,\lambda}[\Sigma]=\int_\Sigma \,dA+\eta\int_\Omega z^m\,dV+\lambda\int_\Omega\,dV,$$
where $\eta, m, \lambda\in\mathbb{R}$ are constants. The first integral is the area of $\Sigma$ (usually modeling the liquid-air interface) and it measures the surface tension. The second term is the vertical potential energy acting on the fluid bulk $\Omega$. The constant $\eta$ represents a physical quantity involving the difference between the mass density across $\Sigma$. The constant $\lambda$ is a Lagrange multiplier which reflects the physical hypothesis that the enclosed volume is fixed through any variation of the fluid. Critical points of the energies $E_{\eta,m,\lambda}$ are solutions of the equation 
\begin{equation}\label{ELE}
2H=\eta z^m+\lambda.
\end{equation}
A surface satisfying \eqref{ELE} is said to be a stationary surface. If $\eta=0$ or $m=0$, the potential energy depending on the height is neglected and $\Sigma$ is a surface with constant mean curvature (minimal surface if $\lambda=0$). If $m=1$, the physical situation is that of a liquid drop deposited on a horizontal plane in the presence of constant gravity (\cite{fi}).

\subsection{Objectives} 

Critical points for the energy functionals $W_{p,\mu,\varsigma}$, $F_{\alpha,\varpi}$ and $E_{\eta,m,\lambda}$ are unrelated in general,  unless they are constant mean curvature surfaces. Moreover, the physical scenarios from which they arise are nothing alike. Clearly, the corresponding Euler-Lagrange equations \eqref{eq-willmore}, \eqref{eq-weighted} and \eqref{ELE} are not even similar. Indeed, \eqref{eq-willmore} is an equation of order four, while \eqref{eq-weighted} and \eqref{ELE} are of second order.

The aim of this paper is to establish relations in the nontrivial case (i.e., for surfaces with nonconstant mean curvature) when the critical points of these energies have a particular geometry. The geometric property is that the critical points are cylindrical surfaces. A cylindrical surface is a surface obtained by moving a straight line parallel to itself along a curve contained in an orthogonal plane, called the generating curve. Under this assumption, the Euler-Lagrange equations \eqref{eq-willmore}, \eqref{eq-weighted} and \eqref{ELE} reduce to ordinary differential equations that must be satisfied by the generating curves. We will show that these equations are related, for suitable choices of the physical parameters: see Sections \ref{sec4} and \ref{sec5}, respectively. See  Section \ref{sec3} for a summary of these relations. What is more, we will also show that in all the cases the differential equations satisfied by the generating curves are the Euler-Lagrange equations associated with a family of functionals acting on planar curves and involving their curvatures. These functionals for curves give rise to generalized elastic curves: see Section \ref{sec2}.

Critical points of the energies $W_{p,\mu,\varsigma}$, $F_{\alpha,\varpi}$ and $E_{\eta,m,\lambda}$ are defined as those surfaces where the first variation of the energy vanishes among all admissible variations of the surface. It is natural to ask if these surfaces are also minimizers of the energy because these ones reflect the physical situation that the critical point is realizable. A weaker and necessary condition is the non-negativity of the second derivative of the energy, i.e., the stability of a given critical point. The final part of this paper investigates this problem. The stability problem for the Willmore-type energies $W_{p,\mu,\varsigma}$ is a difficult task due to the order of the equation \eqref{eq-willmore} which makes the expression of the second derivative very difficult to handle (\cite{GTT,to}). Some references of the stability of the Willmore energy are \cite{ku,pal1,pal2,ue}. The situation is similar for the weighted area functionals $F_{\alpha,\varpi}$ and it has received little attention in the literature. Recently the first author has investigated the analogous Plateau-Rayleigh phenomenon for singular minimal surfaces ($\alpha=1$ and $\varpi=0$). See \cite{lo3}.

Our main interest is to study the stability problem for the family of energies $E_{\eta,m,\lambda}$. The case $m=1$ is of great interest in the theory of capillarity and the literature is great. Here we refer to \cite{fi} for a background on the problem; see also \cite{kp,we}. For a general value for $m$, it is interesting to provide sufficient conditions so that the surface is stable. If the surface is a graph on a horizontal plane, it will be proven in Section \ref{sec6} that it is stable and, what is more, it is a global minimizer in the class of all graphs with the same boundary.

\section{Generalized elastic curves}\label{sec2}

Throughout this paper we will focus on cylindrical surfaces critical for the energies described above. These surfaces are determined by their generating curves, which will satisfy suitable ordinary differential equations. It turns out that solutions of these equations give rise to a generalization of the classical elastic curves (\cite{eu}). In this section we introduce these curves as well as their origin and respective energy functionals.

The $p$-elastic curves are critical points of a family of classical variational problems involving the curvature $\kappa$ of the curves $\gamma$. In 1738, in a letter to Euler, Daniel Bernoulli proposed to investigate extremals of the functionals
$$\mathbf{\Theta}_p[\gamma]=\int_\gamma \kappa^p\,ds,$$
where $p\in\r$ and $ds$ is the length element. For particular choices of $p$, the functionals $\mathbf{\Theta}_p$ and their critical curves are well understood nowadays. For instance, if $p=2$, $\mathbf{\Theta}_2$ represents the classical bending energy whose critical points, subject to length constraint, are the Euler-Bernoulli elastic curves (\cite{eu}). Formulated in 1691 by Jacob Bernoulli, this was the first case of the functionals $\mathbf{\Theta}_p$ considered in the literature.

Chronologically, the case $p=0$ was the second to be analyzed. The corresponding variational problem was stated in 1697 as a public challenge from Johan Bernoulli to his brother Jacob. The functional $\mathbf{\Theta}_0$ is nothing but the length functional, whose equilibria are geodesics.  
Another classical case corresponds with the functional $\mathbf{\Theta}_1$, which measures the total curvature of the curve. For planar curves, its associated Euler-Lagrange equation is an identity.

Other interesting choices for $p$ were studied in the decade of 1920 by Blaschke (\cite{B}). He showed that critical planar curves for $\mathbf{\Theta}_{1/2}$ are catenaries. Blaschke also considered the functional $\mathbf{\Theta}_{1/3}$, which represents the equi-affine length for convex curves, and proved that the critical curves are parabolas.

Although the functionals $\mathbf{\Theta}_p$ were introduced long time ago, many of their properties have yet to be fully discovered and exploited. This has motivated a great number of works and applications involving these functionals and their associated features (\cite{fkn,sw,wa}). For instance, the study of the evolution of closed planar curves under the gradient flow of $\mathbf{\Theta}_p$ has been recently investigated in \cite{np,opw,po}. Similarly, extensions of these functionals have also been considered in order to understand other topics in the areas of differential geometry and mathematical physics (\cite{AGP,LP1,LP2,LP3}). 

Throughout this paper we will consider the \emph{elastic-type} energies  
$$\mathbf{\Theta}_{p,\mu,\sigma}[\gamma]=\int_\gamma\left(\left(\kappa-\mu\right)^p+\sigma\right)ds,$$
where $\gamma$ is a planar curve and $p,\mu,\sigma\in\r$ are constants. The constant $\sigma$ is a Lagrange multiplier encoding the conservation of the length through the variation. In particular, if $\sigma=0$ there is no length constraint. The distinction between the cases $\sigma=0$ and $\sigma\neq 0$ will be essential in the discussion below. The unconstrained case ($\sigma=0$) was used in \cite[Ths. 5.2, 5.4]{LP1} to characterize the profile curves of surfaces of revolution in $\r^3$ which satisfy the linear Weingarten relation 
\begin{equation}\label{linear}
\kappa_1=a\kappa_2+b,
\end{equation}
between the principal curvatures $\kappa_1$ and $\kappa_2$  of the surface. The particular functional $\mathbf{\Theta}_{1/2,\mu,0}$ was previously defined in \cite{AGP} and used in \cite[Prop. 4.2, Th. 4.3]{AGP} to characterize invariant constant mean curvature surfaces in Riemannian and Lorentzian $3$-space forms. On the other hand, critical points of the constrained case were geometrically described in \cite{LP3}. In the same paper, the different shapes of all critical curves were shown.

Regardless of the boundary conditions, a critical curve for $\mathbf{\Theta}_{p,\mu,\sigma}$ satisfies the associated Euler-Lagrange equation
\begin{equation}\label{ELcurves}
p\frac{d^2}{ds^2}\left(\left(\kappa-\mu\right)^{p-1}\right)+p\kappa^2\left(\kappa-\mu\right)^{p-1}-\kappa\left(\left(\kappa-\mu\right)^p+\sigma\right)=0.
\end{equation}
Solutions of \eqref{ELcurves} will appear many times throughout the paper. For simplicity, we introduce the following terminology.

\begin{definition}\label{d-21} A planar curve $\gamma$ whose curvature $\kappa$ is a solution of \eqref{ELcurves} is called a \emph{generalized elastic curve}. If there is no length constraint ($\sigma=0$), then $\gamma$ is said to be a \emph{free generalized elastic curve}.
\end{definition}

The role of generalized elastic curves will be crucial in our proofs. Indeed, we will relate the critical points of any of the energies described in the introduction to those of another one passing through this concept. 

\section{Definitions and summary of results}\label{sec3}

This section is devoted to defining the energy functionals that will be studied in the paper, as well as stating the corresponding Euler-Lagrange equations. In the last part, we will summarize the relations between critical points of these energies, assuming cylindrical geometry. Since all the definitions and arguments involved also hold for arbitrary dimension of the ambient space, we will consider from now on hypersurfaces in the Euclidean space $\r^{n+1}$ and we will state the results in this context.

Let $\mathbb{R}^{n+1}$ be the Euclidean space of dimension $n+1$ ($n\geq 1$) with Cartesian coordinates $(x_1,\ldots,x_{n+1})$ and let $\r_{+}^{n+1}$ be the upper half-space $x_{n+1}>0$. Let $\Sigma$ be an oriented hypersurface of $\r^{n+1}$ and denote by $\nu:\Sigma\rightarrow\mathbb{S}^n\subset\mathbb{R}^{n+1}$ its Gauss map. The map $\nu$ will be identified with the (globally defined) unit normal vector field along $\Sigma$.  When the hypersurface $\Sigma$ is closed, $\Omega$ will represent the enclosed domain in $\mathbb{R}^{n+1}$. If $\Sigma$ is not closed, we will assume that $\Omega$ is the domain in $\mathbb{R}^{n+1}$ between $\Sigma$ and the hyperplane $x_{n+1}=0$. 

\subsection{Willmore-type energies}

For a hypersurface $\Sigma$ of $\r^{n+1}$, we define the {\it Willmore-type} energies
$$W_{p,\mu,\varsigma}[\Sigma]=\int_\Sigma\left(\left(H-\frac{\mu}{n}\right)^p+\varsigma\right)dA,$$
where $dA$ is the area element in $\r^{n+1}$ and $p,\mu,\varsigma\in\r$ are constants. The constant $\mu$ plays the role of the spontaneous curvature in the Helfrich energy, while $\varsigma$ is a Lagrange multiplier. In the case $\varsigma\neq 0$, the area of the hypersurface $\Sigma$ is preserved through the variation. Using standard methods from calculus of variations and employing compactly supported variations, we can compute the associated Euler-Lagrange equation, obtaining
\begin{equation}\label{ELWillmore}
p\Delta\left(H-\frac{\mu}{n}\right)^{p-1}+p\lvert A\rvert^2\left(H-\frac{\mu}{n}\right)^{p-1}-n^2 H \left(\left(H-\frac{\mu}{n}\right)^p+\varsigma\right)=0,
\end{equation}
where $\lvert A\rvert^2$ is the square of the norm of the second fundamental form. It is easy to check that \eqref{ELWillmore} reduces to \eqref{eq-willmore} in the two-dimensional case ($n=2$) because $\lvert A\rvert^2=4H^2-2K$ holds from the definition of the Gaussian and mean curvatures.

Following the terminology introduced in Section \ref{sec2}, we introduce the corresponding names for solutions of \eqref{ELWillmore}.

\begin{definition}\label{d-31} A hypersurface $\Sigma$ of $\r^{n+1}$ whose mean curvature $H$ satisfies \eqref{ELWillmore} is called a \emph{generalized Willmore hypersurface}. If there is no area constraint ($\varsigma=0$), then $\Sigma$ is said to be a \emph{free generalized Willmore hypersurface}.
\end{definition}

\subsection{Weighted areas}

The {\it weighted area} energies of a hypersurface $\Sigma$ are defined by
$$F_{\alpha,\varpi}[\Sigma]=\int_\Sigma x_{n+1}^\alpha\,dA+\varpi\int_\Omega x_{n+1}^\alpha\,dV,$$
where $dV$ is the volume element in $\mathbb{R}^{n+1}$, and $\alpha, \varpi\in\r$ are constants. If $\alpha\notin \mathbb{N}$, $x_{n+1}^\alpha$ is only well defined for $x_{n+1}>0$ and hence, we will assume that the hypersurface $\Sigma$ is included in $\r^{n+1}_+$. The Euler-Lagrange equation associated with $F_{\alpha,\varpi}$ can be described in terms of its mean curvature, namely, 
\begin{equation}\label{ELF}
H=\frac{\alpha}{n}\frac{\nu_{n+1}}{x_{n+1}}+\frac{\varpi}{n},
\end{equation}
where $\nu=(\nu_1,\ldots,\nu_{n+1})$ is the unit normal along $\Sigma$. Note that if $\alpha=0$, then critical points are hypersurfaces with constant mean curvature, which will not be considered here. From now on, we will assume $\alpha\neq 0$.

\begin{definition} A hypersurface $\Sigma$ of $\r^{n+1}$ (restricted to $\r^{n+1}_+$ when necessary) whose mean curvature $H$ satisfies \eqref{ELF} is called a \emph{generalized singular minimal hypersurface}.
\end{definition}

Generalized singular minimal hypersurfaces also appear in the theory of weighted manifolds developed by Gromov (\cite{gr,mo3}). We briefly describe this here. Let $\psi$ be a density function on $\r^{n+1}$ and consider the weighted area and weighted volume elements on $\r^{n+1}$ defined by $dA_\psi= e^{\psi} dA$ and $dV_\psi = e^{\psi}  dV$, respectively. Critical points of the weighted area for all weighted volume preserving variations are characterized by the equation
$n H= \langle \overline{\nabla}\psi,\nu\rangle+\varpi$, where $\overline{\nabla}$ is the gradient on $\r^{n+1}$. If we take the particular density $\psi(x_1,\ldots,x_{n+1})=\alpha\log(x_{n+1})$, $x_{n+1}>0$, then the weighted area coincides with the first term in $F_{\alpha,\varpi}$ while the weighted volume is the second integral. The function 
$$H_\psi=H-\frac{\alpha}{n}\frac{\nu_{n+1}}{x_{n+1}}$$ 
is called the weighted mean curvature of $\Sigma$. Thus the critical points of $F_{\alpha,\varpi}$ are those hypersurfaces with constant weighted mean curvature $H_\psi$.

\subsection{Vertical potential energies}

The {\it vertical potential energies} for a hypersurface $\Sigma$ are defined by
$$E_{\eta,m,\lambda}[\Sigma]=\int_\Sigma \,dA+\eta\int_\Omega x_{n+1}^m\,dV+\lambda\int_\Omega\,dV,$$
where $\eta,m,\lambda\in\mathbb{R}$ are constants. As above, if $m\notin \mathbb{N}$, we will again assume that $\Sigma$ is included in $\r^{n+1}_+$. The associated Euler-Lagrange equation for this family of functionals is
\begin{equation}\label{ELE2}
nH=\eta x_{n+1}^m+\lambda.
\end{equation}
Observe that if $\eta=0$ or $m=0$, solutions of \eqref{ELE2} are hypersurfaces with constant mean curvature. Throughout this paper we will consider $\eta\neq 0$ and $m\neq 0$.

\begin{definition} A hypersurface $\Sigma$ of $\r^{n+1}$ (restricted to $\r^{n+1}_+$ when necessary) whose mean curvature $H$ satisfies \eqref{ELE2} is called a \emph{stationary hypersurface}.
\end{definition}

\subsection{Summary of results}

Once the three families of energies have been defined, we present a summary of the results which will be proven in this paper. These results show several relations between the critical points with nonconstant mean curvature of above energies, assuming cylindrical geometry and for suitable choices of the physical parameters. For the sake of clarity and brevity, the dependence on these parameters will be omitted in the summary (for details, the reader should look at the corresponding statements).

A hypersurface $\Sigma$ of $\r^{n+1}$ generated by moving an $(n-1)$-dimensional affine space parallel to itself along a curve contained in an orthogonal plane is a \emph{cylindrical hypersurface}. By definition, $\Sigma$ is invariant in the direction of $n-1$ unit vectors $w_i\in\mathbb{R}^{n+1}$, $1\leq i\leq n-1$ and we may consider the parameterization of $\Sigma$,
\begin{equation}\label{param}
\phi(s,t)=\gamma(s)+\sum_{i=1}^{n-1} t_iw_i,\quad t=(t_1,\ldots,t_{n-1}),
\end{equation}
where $\gamma$ is the curve contained in an orthogonal plane to the linear subspace generated by all vectors $w_i$. The curve $\gamma$ is called the \emph{generating curve} of  $\Sigma$. If $n=1$, the hypersurface $\Sigma$ is nothing but the curve $\gamma$. Without loss of generality, the parameter $s\in I\subset\mathbb{R}$ denotes the arc length parameter of $\gamma$ and $\left(\,\right)'$ the derivative with respect to $s$. Let  $T(s)=\gamma'(s)$ be the unit tangent vector field along the planar curve $\gamma$ and define the unit normal vector field $N(s)$ along $\gamma(s)$ to be the counter-clockwise rotation of $T(s)$ through an angle $\pi/2$ in the plane where $\gamma$ lies. In this setting, the Frenet equation
$$T'(s)=\kappa(s)N(s),$$
defines the curvature $\kappa$ of $\gamma$. In terms of the parameterization \eqref{param}, the unit normal $\nu$ of $\Sigma$ is $\nu(s,t)=N(s)$ and the mean curvature is
\begin{equation}\label{H}
H(s,t)=\frac{\kappa(s)}{n}.
\end{equation}

Observe that if  $\Sigma$ is the curve $\gamma$ ($n=1$), then the Willmore-like energies $W_{p,\mu,\varsigma}$ and their associated Euler-Lagrange equations \eqref{ELWillmore} are nothing but the elastic-type energies $\mathbf{\Theta}_{p,\mu,\sigma}$ and \eqref{ELcurves}, respectively. This follows directly since from \eqref{H} the curvature $\kappa$ of the curve is the same as its mean curvature $H$. For arbitrary dimension, we have the following result.

\begin{proposition}\label{relationprop} Let $\Sigma$ be a cylindrical hypersurface of $\r^{n+1}$ and let $\gamma$ be its generating curve. Then $\Sigma$ is a generalized Willmore hypersurface if and only if $\gamma$ is a generalized elastic curve for the relation $\sigma=n^p\varsigma$. In particular, $\Sigma$ is a free generalized Willmore hypersurface if and only if $\gamma$ is a free generalized elastic curve.
\end{proposition}
\begin{proof} The proof follows by comparing the Euler-Lagrange equations \eqref{ELcurves} and \eqref{ELWillmore}. Using the expression of the mean curvature of a cylindrical hypersurface \eqref{H}, the Euler-Lagrange equation \eqref{ELWillmore} becomes
$$p\frac{d^2}{ds^2}\left((\kappa-\mu)^{p-1}\right)+p\lvert A\rvert^2\left(\kappa-\mu\right)^{p-1}-\kappa\left((\kappa-\mu)^p+n^p\varsigma\right)=0\,.$$
Finally, from the definition of a cylindrical hypersurface it follows that $\lvert A\rvert^2=\kappa^2$ and so \eqref{ELWillmore} is, precisely, \eqref{ELcurves} for $\sigma=n^p\varsigma$.
\end{proof}

Since for the proofs of the relations all arguments go through the concept of generalized elastic curve, Proposition \ref{relationprop} becomes essential. The rest of the relations are listed below (see also the flow diagram in Figure \ref{fig1}):
\begin{enumerate} 
\item A cylindrical hypersurface is a generalized singular minimal hypersurface if and only if it is a free generalized Willmore hypersurface  (Theorem \ref{t1}).
\item A cylindrical hypersurface is a stationary hypersurface if and only if it is a generalized Willmore hypersurface (Theorem \ref{tnew}).
\item Any generalized singular minimal cylindrical hypersurface is a stationary hypersurface (Theorem \ref{t3}).
\end{enumerate}

\begin{figure}[ht] \begin{center}
\begin{tikzpicture}[auto]

    \node[draw,rectangle,rounded corners,node distance = 1cm,
    text width=8em,  text centered,       minimum height=1cm] at (-2,0) (block1){Free generalized Willmore hypersurfaces $W_{p,\mu,0}$};

    \node[draw,  rectangle,rounded corners,
    text width=8em,    text centered,     minimum height=1cm] at (3,0) (block2){Generalized Willmore hypersurfaces $W_{p,\mu,\varsigma}$};
 
    \node[draw,rectangle,rounded corners,
    text width=8em,   text centered,     minimum height=1cm] at (-2,-2.5) (block3){Free generalized elastic curves\\ $\Theta_{p,\mu,0}$};

    \node[draw,  rectangle,rounded corners,
    text width=8em,     text centered,    minimum height=1cm] at (3,-2.5) (block4){Generalized elastic curves\\ $\Theta_{p,\mu,\sigma}$};
        
\node[rectangle, draw, 
    text width=10em, text centered, rounded corners, minimum height=4em] at (-5,-5.5)(block5) {Generalized singular minimal hypersurfaces\\ 
       $F_{\alpha\not=-1,\varpi}$};

    \node[draw,rectangle,rounded corners,
       text width=6em, text centered, 
        minimum height=1cm] at (6,-5.5) (block6){Stationary\\ hypersurfaces\\ $E_{\eta,m\not=-1,\lambda}$};
        
\draw(block1) edge[above,->] node{Def. \ref{d-31}} (block2);
  \draw(block3) edge[above,->] node{Def. \ref{d-21}} (block4);
  \draw(block2) edge[sloped, anchor=center,bend left, above,<->] node{Th. \ref{tnew}} (block6);
\draw     (block5) edge[sloped, anchor=center,above,<->] node{Th. \ref{t1}} (block3);
\draw     (block4) edge[sloped, anchor=center,above,<->] node{Th. \ref{tnew}} (block6);
\draw(block1) edge[ right,<->] node{Prop. \ref{relationprop}} (block3);
\draw(block2) edge[ right,<->] node{Prop. \ref{relationprop}} (block4);
\draw(block5) edge[above,->] node{Th. \ref{t3}} (block6);
 \draw(block1) edge[sloped, anchor=center,bend right, above,<->] node{Th. \ref{t1}} (block5);

\end{tikzpicture}
\end{center}
\caption{Relations for a cylindrical hypersurface with nonconstant mean curvature. On the arrows we indicate the results where the corresponding relations are proven.}\label{fig1}
\end{figure}
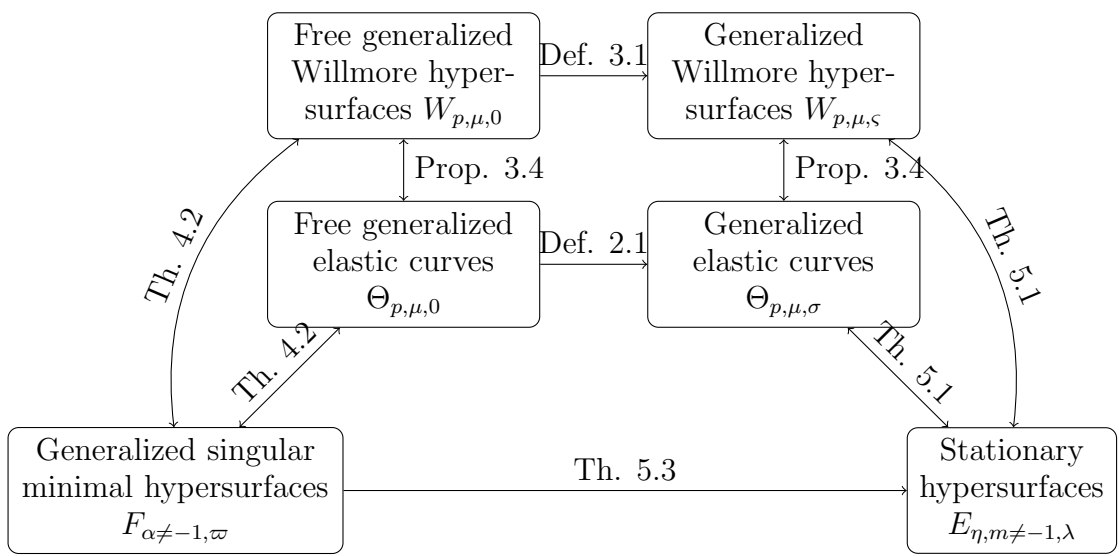

In Section \ref{sec6}, the second variation of the energies $E_{\eta,m,\lambda}$   will be analyzed asking when stationary hypersurfaces are local minimizers of the energy. The first main result is that under mild hypothesis, stationary graphs are stable. Then it will be proven that these graphs are indeed absolute minimizers for $E_{\eta,m,\lambda}$ if we compare them with other graphs with the same boundary.

\begin{theorem}\label{t-stable} Let $\Sigma$ be a compact stationary graph over the horizontal hyperplane $x_{n+1}=0$ for $m>0$. If $\eta\nu_{n+1}>0$ holds on $\Sigma$, then $\Sigma$ is stable. Moreover, $\Sigma$ is a minimizer of the energy $E_{\eta,m,\lambda}$ in the class of all graphs over $x_{n+1}=0$ and with the same boundary as $\Sigma$.
\end{theorem}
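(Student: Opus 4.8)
The plan is to use the graph structure to rewrite $E_{\eta,m,\lambda}$ as a functional of a single scalar function and then prove that this functional is convex; convexity yields both global minimality and stability at once. Concretely, I would orient $\Sigma$ by the upward unit normal, write $\Sigma$ as the graph $x_{n+1}=u(x)$ of a function $u\ge 0$ over a bounded domain $D\subset\mathbb{R}^n$ whose boundary $\partial D$ is the vertical projection of $\partial\Sigma$, and record that $\nu_{n+1}=1/\sqrt{1+|\nabla u|^2}>0$. Thus the hypothesis $\eta\nu_{n+1}>0$ is simply $\eta>0$. The competitors are the graphs $v\colon D\to\mathbb{R}$ with $v=u$ on $\partial D$; these form an affine (hence convex) class $\mathcal{A}$, and every such graph has the same boundary $\partial\Sigma$.

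\textbf{Reduction.} Using $dA=\sqrt{1+|\nabla u|^2}\,dx$ and Fubini on the solid region $\Omega=\{0\le x_{n+1}\le u(x)\}$, the three terms of the energy become
\begin{equation*}
\mathcal{E}[u]:=\int_D\sqrt{1+|\nabla u|^2}\,dx+\frac{\eta}{m+1}\int_D u^{m+1}\,dx+\lambda\int_D u\,dx,
\end{equation*}
so that $E_{\eta,m,\lambda}[\Sigma]=\mathcal{E}[u]$ for every graph. Integrating the area term by parts, the first variation of $\mathcal{E}$ produces the equation $-\mathrm{div}\bigl(\nabla u/\sqrt{1+|\nabla u|^2}\bigr)+\eta u^m+\lambda=0$, which, under the sign convention $nH=\mathrm{div}\bigl(\nabla u/\sqrt{1+|\nabla u|^2}\bigr)$ for the upward normal (consistent with \eqref{H}), is exactly the stationarity equation \eqref{ELE2}. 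Hence the stationary graph $u$ is a critical point of $\mathcal{E}$ in $\mathcal{A}$.

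\textbf{Convexity, minimality, stability.} Next I would show $\mathcal{E}$ is convex on $\mathcal{A}$: the area integrand $p\mapsto\sqrt{1+|p|^2}$ is convex on $\mathbb{R}^n$, the last term is linear, and since $m>0$ and $\eta>0$ the map $t\mapsto\frac{\eta}{m+1}t^{m+1}$ is convex on $[0,\infty)$. A convex functional that has a critical point in an affine class attains its minimum there, so $\mathcal{E}[v]\ge\mathcal{E}[u]$ for every $v\in\mathcal{A}$; translating back gives $E_{\eta,m,\lambda}[\Sigma']\ge E_{\eta,m,\lambda}[\Sigma]$ for all graphs $\Sigma'$ over $D$ with the same boundary, which is the claimed global minimality. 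Stability follows by restricting to variations $u+t\phi$ with $\phi|_{\partial D}=0$, for which
\begin{equation*}
\delta^2\mathcal{E}[u](\phi)=\int_D\frac{|\nabla\phi|^2(1+|\nabla u|^2)-(\nabla u\cdot\nabla\phi)^2}{(1+|\nabla u|^2)^{3/2}}\,dx+\eta m\int_D u^{m-1}\phi^2\,dx\ge 0,
\end{equation*}
the first integrand being nonnegative by Cauchy--Schwarz and the second because $\eta,m>0$ and $u>0$. Since at a critical point the second variation of $E_{\eta,m,\lambda}$ depends only on the normal component of the variation, this coincides with the geometric second variation along $f\nu$ with $f=\phi\,\nu_{n+1}$.

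\textbf{Main obstacle.} The essential difficulty is bookkeeping rather than conceptual: I must pin down the orientation and sign conventions so that stationarity corresponds to criticality of $\mathcal{E}$ with the \emph{convex} (not concave) choice of $\eta$, i.e. verifying that $\eta\nu_{n+1}>0$ is precisely the sign making the height term convex. A secondary care point is behaviour where $u$ may vanish, so that $u^{m+1}$ is only $C^1$ when $0<m<1$; I would handle the minimality statement purely through convexity of $t\mapsto t^{m+1}$ on $[0,\infty)$ and confine the stability integral to the interior where $u>0$. Finally I would state cleanly that the comparison is with graphs over the common base domain $D$ fixed by $\partial\Sigma$.
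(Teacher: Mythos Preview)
Your proposal is correct and takes a genuinely different route from the paper. The paper treats the two assertions separately. For stability it first computes $L[\nu_{n+1}]=-m\eta x_{n+1}^{m-1}$ and then runs the Fischer--Colbrie--Schoen trick: writing an arbitrary test function as $u=w\nu_{n+1}$ and using that identity to rewrite $E''_{\eta,m,\lambda}(0)[u]$ as $\int_\Sigma\bigl(m\eta\nu_{n+1}x_{n+1}^{m-1}w^2+\nu_{n+1}^2|\nabla w|^2\bigr)\,dA\ge 0$. For the minimality it builds a calibration: the unit normal $\nu$ is extended by vertical translation to a unit vector field $X$ on $U\times\mathbb{R}$, a compensating vertical field $Y=\bigl(\tfrac{\eta}{m+1}x_{n+1}^{m+1}+\lambda x_{n+1}\bigr)e_{n+1}$ is added, and the divergence theorem applied to $Z=X+Y$ over the region between $\Sigma$ and a competing graph $\widetilde\Sigma$ yields the energy comparison, the key inequality being $\langle\nu,\widetilde\nu\rangle\le 1$.

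Your approach instead passes once and for all to the nonparametric functional $\mathcal{E}[u]$ and observes that every summand is convex in $(u,\nabla u)$ as soon as $\eta>0$, $m>0$ and $u\ge 0$; both conclusions then drop out of the single convexity statement. This is shorter and makes it transparent why the hypothesis $\eta\nu_{n+1}>0$ is exactly what is needed: it is the sign of the second derivative of $t\mapsto \eta t^{m+1}/(m+1)$. The paper's arguments are more intrinsic to the hypersurface and plug into well-known general machinery (positive Jacobi fields, calibrations) that one could hope to export beyond the graphical setting. The two proofs are close relatives underneath: the calibration inequality $\langle\nu,\widetilde\nu\rangle\le 1$ is the same Cauchy--Schwarz step that makes $p\mapsto\sqrt{1+|p|^2}$ convex, and the paper's field $Y$ is exactly the antiderivative producing your term $\tfrac{\eta}{m+1}u^{m+1}+\lambda u$.
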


\section{Generalized singular minimal hypersurfaces}\label{sec4}

In this section we will show the relation between generalized singular minimal hypersurfaces and generalized Willmore hypersurfaces. As a first observation, note that the energies $F_{\alpha,\varpi}$ depend on two parameters and so it is reasonable to expect a relation with a biparametric family of energies instead of $W_{p,\mu,\varsigma}$. Indeed, we will see that the relation is, precisely, with the unconstrained case $W_{p,\mu,0}$.

We first prove a geometric property about the rulings of generalized singular minimal cylindrical hypersurfaces. If $\{e_1,\ldots,e_{n+1}\}$ denotes the canonical basis of $\r^{n+1}$ we will show that the rulings of any generalized singular minimal cylindrical hypersurface are all orthogonal to the vector $e_{n+1}$. 

\begin{lemma} Let $\Sigma$ be a cylindrical hypersurface with nonconstant mean curvature. If $\Sigma$ is a generalized singular minimal hypersurface, then the rulings of $\Sigma$ are orthogonal to $e_{n+1}$.
\end{lemma}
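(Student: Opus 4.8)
The plan is to substitute the cylindrical data into the Euler--Lagrange equation \eqref{ELF} and exploit the fact that, along each ruling, the left-hand side stays constant while the height function varies linearly. First I would record which of the quantities appearing in \eqref{ELF} depend on the ruling parameters $t=(t_1,\dots,t_{n-1})$. From \eqref{H} the mean curvature is $H=\kappa(s)/n$, and since $\nu(s,t)=N(s)$, the vertical component $\nu_{n+1}=\langle N(s),e_{n+1}\rangle$ depends on $s$ alone. By contrast, using the parameterization \eqref{param}, the height function is
$$x_{n+1}(s,t)=\langle\gamma(s),e_{n+1}\rangle+\sum_{i=1}^{n-1}t_i\,\langle w_i,e_{n+1}\rangle,$$
which is an affine function of $t$ whose slopes are exactly the numbers $c_i:=\langle w_i,e_{n+1}\rangle$ that I want to show vanish.

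Substituting these expressions into \eqref{ELF} and clearing the (positive) denominator $x_{n+1}$ gives the scalar identity
$$\bigl(\kappa(s)-\varpi\bigr)\,x_{n+1}(s,t)=\alpha\,\nu_{n+1}(s),$$
valid at every point of $\Sigma$. The right-hand side depends only on $s$, so differentiating in $t_i$ (equivalently, comparing the coefficient of $t_i$ on both sides, using the affine form of $x_{n+1}$ above) yields
$$\bigl(\kappa(s)-\varpi\bigr)\,c_i=0\qquad\text{for every }i\text{ and every }s.$$

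To finish I would invoke the nonconstant mean curvature hypothesis. Since $H=\kappa/n$ is nonconstant, $\kappa$ is not identically equal to $\varpi$, so there exists some $s_0$ with $\kappa(s_0)\neq\varpi$; evaluating the displayed relation at $s_0$ then forces $c_i=0$ for all $i$, which is precisely the assertion that each ruling direction $w_i$ is orthogonal to $e_{n+1}$. There is no serious obstacle here, as the argument is essentially a separation-of-variables observation; the only point requiring slight care is the quantifier handling in this last step. The relation $(\kappa(s)-\varpi)c_i=0$ must be read as holding for \emph{all} $s$, so one cannot conclude $c_i=0$ merely from a single height at which $\kappa$ happens to equal $\varpi$, but the nonconstancy of $\kappa$ guarantees a height where the factor $\kappa(s)-\varpi$ is nonzero.
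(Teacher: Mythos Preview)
Your proof is correct and follows essentially the same approach as the paper: both substitute the cylindrical data into \eqref{ELF} and differentiate along the ruling parameters $t_i$. The only cosmetic difference is that you first clear the denominator to obtain $(\kappa-\varpi)x_{n+1}=\alpha\nu_{n+1}$ and then use the nonconstancy of $\kappa$ to find $s_0$ with $\kappa(s_0)\neq\varpi$, whereas the paper differentiates the quotient directly and argues that $\alpha\neq 0$ and $\nu_{n+1}\not\equiv 0$; these are equivalent observations via the cleared equation.
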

\begin{proof} A generalized singular minimal hypersurface satisfies \eqref{ELF}. In the case of a cylindrical hypersurface this equation is, using \eqref{H}, equivalent to  
\begin{equation}\label{elft}
\kappa(s)=\alpha\,\frac{\nu_{n+1}(s)}{x_{n+1}(s,t)}+\varpi.
\end{equation}
Differentiating with respect to $t_i$, for any $i=1,...,n-1$,  
$$0=-\alpha\frac{\nu_{n+1}}{x_{n+1}^2}\frac{d}{dt_i} x_{n+1}=-\alpha\frac{\nu_{n+1}}{x_{n+1}^2}\langle w_i,e_{n+1}\rangle.$$
Since $\Sigma$ has nonconstant mean curvature, then $\alpha\neq 0$ and $\nu_{n+1}\neq 0$. Thus, $\langle w_i,e_{n+1}\rangle=0$ for all $1\leq i\leq n-1$. This proves the result.
\end{proof}
 
In view of this lemma, and  after a suitable rotation around $e_{n+1}$, we may assume  that the rulings are parallel to $e_i$ for $1\leq i\leq n-1$, and that  $\gamma$ is contained in the plane spanned by $\{e_n,e_{n+1}\}$. Note that rotations about $e_{n+1}$ do not change the equilibrium condition \eqref{ELF}.  Under these assumptions, equation \eqref{elft} reads
\begin{equation}\label{elf}
\kappa(s)=\alpha\,\frac{\nu_{n+1}(s)}{x_{n+1}(s)}+\varpi.
\end{equation}
If $\kappa$ is constant, then $\gamma$ is either a straight line or a circle and the cylindrical hypersurface generated by $\gamma$ has constant mean curvature thanks to \eqref{H}. If $\kappa$ is not constant, we prove that, for suitable choices of the constants $\alpha, \varpi\in\mathbb{R}$, the equation \eqref{elf} is the Euler-Lagrange equation associated with $\mathbf{\Theta}_{p,\mu,0}$.   

\begin{theorem}\label{t1} Let $\Sigma$ be a cylindrical hypersurface with nonconstant mean curvature. Then $\Sigma$ is a generalized singular minimal hypersurface for $\alpha\neq -1$ if and only if its generating curve is a free generalized elastic curve where $p=\alpha/(\alpha+1)$ and $\mu=\varpi/(\alpha+1)$. Consequently, $\Sigma$ is a generalized singular minimal hypersurface for $\alpha\neq -1$ if and only if it is a free generalized Willmore hypersurface for the above values of $p$ and $\mu$.
\end{theorem}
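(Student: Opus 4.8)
The plan is to work entirely with the generating curve $\gamma$, which by the preceding lemma may be taken to lie in the plane $\mathrm{span}\{e_n,e_{n+1}\}$, so that \eqref{ELF} reduces to the scalar second-order equation \eqref{elf}. I must show that \eqref{elf} is equivalent, after a rigid motion of this plane, to the fourth-order Euler--Lagrange equation \eqref{ELcurves} with $\sigma=0$ and $p=\alpha/(\alpha+1)$, $\mu=\varpi/(\alpha+1)$; the final ``consequently'' then follows at once from Proposition \ref{relationprop}, which identifies free generalized elastic curves with the generating curves of free generalized Willmore hypersurfaces. Throughout I abbreviate $z:=x_{n+1}$, $u:=\nu_{n+1}=\langle N,e_{n+1}\rangle=\langle T,e_n\rangle$ and $w:=\langle T,e_{n+1}\rangle=z'$, so that the Frenet equations give $u'=-\kappa w$, $w'=\kappa u$ and $u^2+w^2=1$. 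I also write $f(\kappa)=(\kappa-\mu)^p$ and $g:=f'(\kappa)=p(\kappa-\mu)^{p-1}$, noting that the excluded value $\alpha=-1$ is precisely the one making $p$ and $\mu$ undefined.

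The main tool is the Noether conservation law for \eqref{ELcurves}. A direct computation with the Frenet equations shows that, for \emph{any} planar curve, the vector field
$$\mathbf{J}:=\bigl(\kappa g-f\bigr)\,T+g'\,N$$
satisfies $\mathbf{J}'=\bigl(g''+\kappa^2 g-\kappa f\bigr)\,N$, whose scalar factor is exactly the left-hand side of \eqref{ELcurves} with $\sigma=0$; the crucial point is that the $T$-component of $\mathbf{J}'$ cancels identically. Hence $\gamma$ is a free generalized elastic curve if and only if $\mathbf{J}$ is a constant vector, and the theorem reduces to matching the second-order condition \eqref{elf} with the constancy of $\mathbf{J}$.

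For the forward implication I assume \eqref{elf}, i.e. $(\kappa-\varpi)z=\alpha u$. Differentiating once and using $u'=-\kappa w$ gives $\kappa' z=-(\alpha+1)(\kappa-\mu)z'$, and from this two short computations (using $(1-p)\alpha=p$, $(p-1)(\alpha+1)=-1$, and the rewriting $\kappa g-f=-(1-p)(\kappa-\mu)^{p-1}(\kappa-\varpi)$) yield $\kappa g-f=-(g/z)\,u$ and $g'=(g/z)\,w$; the latter forces $g/z$ to be constant. Combining these with the planar identity $-uT+wN=-e_n$ gives $\mathbf{J}=-(g/z)\,e_n$, a constant vector, so \eqref{ELcurves} holds (equivalently, one obtains the first integral $(\kappa-\mu)z^{\alpha+1}=\mathrm{const}$). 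For the converse I start from \eqref{ELcurves}, so $\mathbf{J}$ is constant; the hypothesis of nonconstant mean curvature makes $\kappa$ nonconstant, hence $g'\not\equiv 0$ and $\mathbf{J}\neq 0$. After rotating the plane so that $\mathbf{J}=\lambda e_n$ with $\lambda\neq 0$, expressing this in the Frenet frame via $e_n=uT-wN$ gives $\kappa g-f=\lambda u$ and $g'=-\lambda w$; integrating the second yields $g+\lambda z=\mathrm{const}$, so a vertical translation achieves $z=-g/\lambda$, and substituting this into the first recovers precisely \eqref{elf} with the stated $\alpha,\varpi$.

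The main obstacle is the bookkeeping of the converse rather than any single computation. Equation \eqref{ELcurves} has order four and its solutions carry the two-parameter freedom of the constant vector $\mathbf{J}$, whereas \eqref{elf} has order two; the delicate point is that this extra freedom is absorbed exactly by the rotation and vertical translation used to normalize $\mathbf{J}$, so that \emph{free generalized elastic} and \emph{generalized singular minimal} describe the same curves up to a rigid motion of the plane (the elastic condition being rigid-motion invariant, the singular-minimal condition invariant only under horizontal motions). Two side conditions must be tracked: the degenerate case $\mathbf{J}=0$, which I rule out via the nonconstant-curvature hypothesis since it would force $\kappa$ constant, and the positivity constraints $z>0$ and $\kappa-\mu>0$ required for the powers to be defined, which fix the orientation $\mathbf{J}=\lambda e_n$ and the admissible sign of $\lambda$.
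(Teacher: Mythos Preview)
Your proof is correct. Both your argument and the paper's hinge on the same first integral of \eqref{ELcurves}, but the packaging differs: the paper quotes \cite[Prop.~3.3]{LP3}, which says that a planar curve is critical for $\int P(\kappa)\,ds$ iff, in suitable coordinates, its second coordinate equals $\dot P(\kappa)/\sqrt{d}$, and then reads off from \eqref{elf} a first-order ODE $(\alpha+1)\kappa\dot P-\varpi\dot P=\alpha P$ whose solution is $P(\kappa)=(\kappa-\mu)^p$. You instead develop the Noether vector $\mathbf J=(\kappa g-f)T+g'N$ from scratch and show directly that \eqref{elf} forces $\mathbf J=-(g/z)\,e_n$ to be constant, while constancy of $\mathbf J$ recovers \eqref{elf} after a rotation and a vertical translation. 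The two are equivalent---your $\kappa g-f$ and $g$ are exactly the paper's $\kappa\dot P-P$ and $\dot P$---but your version is self-contained and makes transparent what the paper leaves implicit: that the extra integration constants in passing from the fourth-order equation \eqref{ELcurves} to the second-order equation \eqref{elf} are absorbed precisely by a rigid motion of the plane, and that the nonconstant-curvature hypothesis is what rules out the degenerate case $\mathbf J=0$. Your forward direction also avoids the paper's appeal to the inverse function theorem (which gives only a local statement near points where $\kappa'\neq 0$), since the identity $\mathbf J=-(g/z)\,e_n$ holds on the whole curve once \eqref{elf} does.
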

\begin{proof} 
Consider an energy functional acting on planar curves of the type
$$\mathbf{\Theta}[\gamma]=\int_\gamma P(\kappa)\,ds,$$
where $P$ is a smooth function defined on an adequate domain. Then a planar curve $\gamma$ with nonconstant curvature satisfies the Euler-Lagrange equation associated with $\mathbf{\Theta}$ if and only if there is a coordinate system such that $\gamma$ can be locally expressed as $\gamma(s)=\left(\gamma_1(s),\gamma_2(s)\right)$ with
$$\gamma_2(s)=\frac{1}{\sqrt{d}}\dot{P}\left(\kappa(s)\right),$$
for any constant $d>0$ (\cite[Prop. 3.3]{LP3}). The upper dot denotes the derivative with respect to $\kappa$.  

Assume that $\gamma$ is the generating curve of a cylindrical hypersurface satisfying \eqref{ELF}. In particular, equation \eqref{elf} holds for the generating curve $\gamma$. Since $\kappa$ is not constant, from the inverse function theorem we may assume that the arc length parameter $s$ is, locally, a function of the curvature $\kappa$. Then $\gamma_2(s)=\dot{P}(\kappa)/\sqrt{d}$ for a suitable function $\dot{P}$ and constant $d>0$. Using   that $s$ is the arc length parameter, the curve $\gamma$ can be parameterized as
$$\gamma(s)=\frac{1}{\sqrt{d}}\left(-\int\left(\kappa\dot{P}-P\right)ds,\dot{P}\right).$$
It follows that $x_{n+1}(s)=\dot{P}/\sqrt{d}$ and $\nu_{n+1}(s)=-\left(\kappa\dot{P}-P\right)/\sqrt{d}$. Therefore, equation \eqref{elf} reduces to the first order ordinary differential equation in $P=P(\kappa)$,
\begin{equation}\label{ODE}
\left(\alpha+1\right)\kappa\dot{P}-\varpi\dot{P}=\alpha P.
\end{equation}
Since $\alpha\neq -1$, this equation can be integrated obtaining
$$P(\kappa)=\left(\kappa-\frac{\varpi}{\alpha+1}\right)^{\frac{\alpha}{\alpha+1}},$$
up to a multiplicative constant. Consequently, \eqref{ELcurves} is satisfied for the values of $p$ and $\mu$ of the statement and for $\sigma=0$, that is, $\gamma$ is a free generalized elastic curve.

For the converse, assume that the generating curve $\gamma$ of a cylindrical hypersurface $\Sigma$ satisfies \eqref{ELcurves}. From the above parameterization of critical curves of general curvature depending energies $\mathbf{\Theta}$, it is clear that \eqref{elf} holds and so does \eqref{ELF}. Then, $\Sigma$ is a generalized singular minimal hypersurface.

The second assertion follows from Proposition \ref{relationprop}.
\end{proof}

\begin{remark} Theorem \ref{t1} provides a relation between $F_{\alpha,\varpi}$ and $W_{p,\mu,0}$ within the class of cylindrical hypersurfaces. However, in the particular case $p=2$ (respectively, $\alpha=-2$) and $n=2$, the same relation holds for surfaces under less restrictive assumptions. In \cite[Prop. 4.1]{PP}, it was proven that any surface satisfying \eqref{ELF} for $\alpha=-2$ also satisfies the Euler-Lagrange equation associated with $W_{2,\mu,0}$, while the converse holds for disc type surfaces of revolution (\cite[Th. 4.1]{PP}).
\end{remark}

Planar curves satisfying \eqref{elf} were described in \cite{lo}. Theorem \ref{t1} is particularly illustrative in the one-dimensional case ($n=1$).

\begin{corollary} A planar curve $\gamma$ with nonconstant curvature is a generalized singular minimal curve for $\alpha\neq-1$ if and only if it is a free generalized elastic curve for $p=\alpha/(\alpha+1)$ and $\mu=\varpi/(\alpha+1)$.
\end{corollary}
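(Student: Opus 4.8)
The plan is to recognize this statement as the one-dimensional specialization ($n=1$) of Theorem \ref{t1} and to reduce it entirely to that result. The starting observation, already recorded in the construction of cylindrical hypersurfaces, is that when $n=1$ there are no ruling directions $w_i$ and the parameterization \eqref{param} degenerates: the cylindrical hypersurface $\Sigma$ is nothing but the generating curve $\gamma$ itself. In particular $\Sigma$ and $\gamma$ are literally the same object, so any property phrased for $\Sigma$ as a hypersurface translates verbatim into a property of the planar curve $\gamma$.

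First I would pin down the dictionary between the two sets of notions in dimension one. From \eqref{H} with $n=1$ the mean curvature $H$ of $\Sigma$ equals the curvature $\kappa$ of $\gamma$, so the defining equation \eqref{ELF} of a generalized singular minimal hypersurface becomes exactly $\kappa=\alpha\,\nu_{2}/x_{2}+\varpi$, i.e. \eqref{elf} with $n=1$; this is precisely the condition defining a generalized singular minimal curve. On the elastic side, a free generalized elastic curve is by Definition \ref{d-21} a curve whose curvature solves \eqref{ELcurves} with $\sigma=0$, a notion that already lives intrinsically on planar curves. Thus both hypotheses of the statement are assertions purely about $\gamma$.

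With this identification in place, the equivalence is immediate: Theorem \ref{t1} asserts that a cylindrical hypersurface with nonconstant mean curvature is a generalized singular minimal hypersurface for $\alpha\neq-1$ if and only if its generating curve is a free generalized elastic curve with $p=\alpha/(\alpha+1)$ and $\mu=\varpi/(\alpha+1)$. Setting $n=1$ and using that here $\Sigma=\gamma$ (so that the hypothesis ``nonconstant mean curvature'' is the hypothesis ``nonconstant curvature''), the first condition becomes ``$\gamma$ is a generalized singular minimal curve'' and the second is unchanged. This is exactly the claim, and I would state the corollary as a direct consequence of Theorem \ref{t1}.

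The only point requiring care — and the closest thing to an obstacle — is bookkeeping rather than analysis: one must check that the degenerate case $n=1$ does not break any step used in proving Theorem \ref{t1}. In that proof the curve is parameterized through $\gamma_2=\dot P(\kappa)/\sqrt d$ and the first-order ODE \eqref{ODE} is integrated under the standing assumption $\alpha\neq-1$; none of these steps invoke the presence of ruling directions, so they remain valid verbatim when $n=1$. Hence no separate argument is needed and the result follows directly.
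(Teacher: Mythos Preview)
Your proposal is correct and matches the paper's approach: the corollary is stated immediately after Theorem~\ref{t1} without a separate proof, being understood as its direct specialization to $n=1$. Your careful check that $\Sigma=\gamma$, $H=\kappa$, and that no step in the proof of Theorem~\ref{t1} relies on the ruling directions is exactly the justification the paper leaves implicit.
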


This corollary explains why the catenary, which satisfies \eqref{ELF} for $\alpha=1$ and $\varpi=0$ in the classical sense of a hanging chain, also satisfies the Euler-Lagrange equation \eqref{ELcurves} of the functional $\mathbf{\Theta}_{1/2,0,0}$ studied by Blaschke (\cite{B}). Similarly, it also shows why the parabola satisfies \eqref{ELF} for $\alpha=1/2$ and $\varpi=0$ as well as \eqref{ELcurves} for $p=1/3$ and $\mu=\sigma=0$ (\cite{B}).

In the following remark we describe the analogue relation of Theorem \ref{t1} for the case $\alpha=-1$.

\begin{remark}\label{t2} Let $\Sigma$ be  a cylindrical hypersurface with nonconstant mean curvature. Then $\Sigma$ is a generalized singular minimal hypersurface for $\alpha=-1$ if and only if its generating curve satisfies the Euler-Lagrange equation associated with the curvature energy 
$$\widetilde{\mathbf{\Theta}}_{\mu}[\gamma]=\int_\gamma e^{\,\mu\kappa}\,ds,$$ 
where $\mu=1/\varpi$. Consequently, $\Sigma$ is a generalized singular minimal hypersurface for $\alpha=-1$ if and only if it satisfies the Euler-Lagrange equation associated with
$$\widetilde{W}_\mu[\Sigma]=\int_\Sigma e^{n\mu H}\,dA,$$
for $\mu=1/\varpi$.
\end{remark}

A further relation can be deduced from Theorem \ref{t1} and Remark \ref{t2}.  As it was mentioned in Section \ref{sec2}, the critical points for compactly supported variations of the energies $\mathbf{\Theta}_{p,\mu,0}$ and $\widetilde{\mathbf{\Theta}}_\mu$ characterize the profile curves of the surfaces of revolution in $\r^3$ which satisfy the linear Weingarten relation \eqref{linear} (\cite{LP1,LP2}).   Consequently, we have the following relation.

\begin{corollary} For every $n\geq 2$, there exists a one-to-one correspondence between surfaces of revolution in $\r^3$ satisfying
$$\kappa_1+\alpha\kappa_2=\varpi,$$
and generalized singular minimal cylindrical hypersurfaces of $\r^{n+1}$.
\end{corollary}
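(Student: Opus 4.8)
The plan is to realize both families of hypersurfaces as the images of one and the same family of planar curves --- the free generalized elastic curves with $p=\alpha/(\alpha+1)$ and $\mu=\varpi/(\alpha+1)$, together with the $\widetilde{\mathbf{\Theta}}_\mu$-critical curves in the degenerate value $\alpha=-1$ --- and then to observe that the two realizations are mutually inverse. On the cylindrical side this is already available: by Theorem \ref{t1} (and Remark \ref{t2} when $\alpha=-1$), the generalized singular minimal cylindrical hypersurfaces of $\r^{n+1}$ with nonconstant mean curvature and parameters $\alpha,\varpi$ are exactly those obtained by extruding a free generalized elastic curve with the stated $p,\mu$. Since these exponents do not depend on $n$, the relevant curve family is the same for every $n\ge 2$, which is what allows the statement to be phrased uniformly in $n$. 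The remaining task is to show that the surfaces of revolution of $\r^3$ satisfying $\kappa_1+\alpha\kappa_2=\varpi$ are generated, upon revolution, by precisely this same family of planar curves.

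To this end I would first rewrite the Weingarten condition as an instance of \eqref{linear} by taking $a=-\alpha$ and $b=\varpi$, so as to invoke the characterization of \cite{LP1,LP2} quoted in Section \ref{sec2}: profile curves of surfaces of revolution obeying \eqref{linear} are exactly the critical curves of $\mathbf{\Theta}_{p,\mu,0}$, or of $\widetilde{\mathbf{\Theta}}_\mu$. In order to pin down the exponents and match them with Theorem \ref{t1} without relying on the precise constants in \cite{LP1,LP2}, I would compute the profile equation directly. For a surface of revolution about an axis, with profile curve $(x(s),z(s))$ parameterized by arc length in the half-plane $x>0$, the principal curvatures are the meridian curvature $\kappa_1=\kappa$ and the parallel curvature $\kappa_2=z'/x$; hence $\kappa_1+\alpha\kappa_2=\varpi$ reads $\kappa=-\alpha\,z'/x+\varpi$. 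Applying the planar rotation $(x,z)\mapsto(-z,x)$ to the generating curve (which preserves arc length and curvature) turns this into $\kappa=\alpha\,\nu_v/v+\varpi$, where now $v=x>0$ plays the role of the height coordinate and $\nu_v=-z'$ is the corresponding component of the unit normal. This is precisely equation \eqref{elf}, so by the proof of Theorem \ref{t1} the profile curve is a free generalized elastic curve with the very same $p=\alpha/(\alpha+1)$ and $\mu=\varpi/(\alpha+1)$ (and a $\widetilde{\mathbf{\Theta}}_\mu$-critical curve when $\alpha=-1$).

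With both correspondences in place, the bijection is obtained by composition. To a surface of revolution satisfying $\kappa_1+\alpha\kappa_2=\varpi$ I associate its profile curve, and then extrude this curve (after the rotation above, which identifies the distance-to-axis coordinate with the height coordinate $x_{n+1}>0$) to produce a generalized singular minimal cylindrical hypersurface of $\r^{n+1}$; the inverse map reads off the generating curve of such a cylindrical hypersurface and revolves it. Because a planar curve of nonconstant curvature is determined up to a rigid motion by its curvature function, and because both constructions recover that same curvature function, the two maps are inverse to each other on congruence classes, yielding the claimed one-to-one correspondence for each $n\ge 2$.

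I expect the main difficulty to be bookkeeping rather than conceptual. One must verify that the dictionary $a=-\alpha$, $b=\varpi$ yields exactly the exponents of Theorem \ref{t1} --- which the direct computation above is designed to confirm, bypassing the precise form of the constants in \cite{LP1,LP2} --- that the positivity and domain constraints agree under the rotation ($x>0$ for the revolution against $x_{n+1}>0$ for the cylinder), and that the degenerate value $\alpha=-1$ is routed correctly through Remark \ref{t2} and $\widetilde{\mathbf{\Theta}}_\mu$. Some care is also needed in specifying \emph{one-to-one} up to the appropriate congruences (planar congruence of the generating curve, and the induced congruences of $\r^3$ and $\r^{n+1}$), so that the correspondence is genuinely well defined and the orientation conventions entering $\kappa_1,\kappa_2$ are fixed consistently with the sign of $\alpha$ in \eqref{elf}.
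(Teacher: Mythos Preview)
Your proposal is correct and follows the same overall architecture as the paper's proof: both sides of the correspondence are identified with the common family of free generalized elastic curves (via Theorem \ref{t1} and Remark \ref{t2}), and the bijection is the composition of the two realizations. The only difference is on the revolution side: the paper simply invokes \cite[Ths.~5.2, 5.4]{LP1} and \cite[Ths.~2.1, 2.7]{LP2} to assert that profile curves of linear Weingarten surfaces of revolution are exactly the critical curves of $\mathbf{\Theta}_{p,\mu,0}$ or $\widetilde{\mathbf{\Theta}}_\mu$, without explicitly checking that the parameters match those of Theorem \ref{t1}. You instead compute the principal curvatures directly, obtain $\kappa=-\alpha z'/x+\varpi$, and use a planar rotation to cast this as an instance of \eqref{elf}, thereby recovering the same $p=\alpha/(\alpha+1)$, $\mu=\varpi/(\alpha+1)$ via the proof of Theorem \ref{t1}. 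Your route is a little longer but more self-contained and makes the parameter dictionary explicit; the paper's is shorter but leans on the cited references for that bookkeeping. Your remarks on congruence classes and orientation conventions address points the paper leaves implicit.
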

\begin{proof}
Let $n\geq 2$ be fixed. From Theorem \ref{t1} and Remark \ref{t2}, a cylindrical hypersurface of $\r^{n+1}$ satisfies \eqref{ELF} if and only if its generating curve satisfies the Euler-Lagrange equation associated with $\mathbf{\Theta}_{p,\mu,0}$ or $\widetilde{\mathbf{\Theta}}_\mu$, depending on the value of $\alpha$, for suitable choices of $p$ and $\mu$.

Similarly, it follows from \cite[Ths. 5.2, 5.4]{LP1} (and \cite[Ths. 2.1, 2.7]{LP2}) that a planar curve $\gamma$ satisfies the Euler-Lagrange equation for $\mathbf{\Theta}_{p,\mu,0}$ or $\widetilde{\mathbf{\Theta}}_\mu$ if and only if the surface of revolution generated by rotating the curve $\gamma(s)=(x(s),0,z(s))\subset\mathbb{R}^3$ around the $z$-axis satisfies a linear relation between the principal curvatures. This gives the correspondence.
\end{proof}

\section{Stationary hypersurfaces}\label{sec5}

In this section we will relate stationary hypersurfaces to generalized Willmore hypersurfaces. Recall that stationary hypersurfaces are the solutions of the Euler-Lagrange equation associated with the vertical potential energies $E_{\eta,m,\lambda}$, while generalized Willmore hypersurfaces are the solutions of the Euler-Lagrange equation for the Willmore-type energies $W_{p,\mu,\varsigma}$. Let $\Sigma$ be a cylindrical hypersurface of $\mathbb{R}^{n+1}$ parameterized by \eqref{param} and suppose that it is a stationary hypersurface satisfying \eqref{ELE2}. In terms of  the curvature $\kappa$ of $\gamma$, this equation is   
\begin{equation}\label{ele}
\kappa(s)=\eta x_{n+1}^m(s)+\lambda.
\end{equation}
If $\kappa$ is constant (recall that $\eta,m\neq 0$), then the function $x_{n+1}$ is constant on $\Sigma$. This proves that $\Sigma$ is a horizontal hypersurface and $\kappa=0$.   If  $\kappa$ is not constant,  we  extend the result \cite[Th. 3.4]{LP3} to prove that $\gamma$ satisfies the Euler-Lagrange equation associated with $\mathbf{\Theta}_{p,\mu,\sigma}$ for suitable energy parameters $p$ and $\mu$. We then conclude with the following result.

\begin{theorem}\label{tnew} Let $\Sigma$ be a cylindrical hypersurface with nonconstant mean curvature. Then $\Sigma$ is a stationary hypersurface for $m\neq -1$ if and only if its generating curve is a generalized elastic curve where $p=(m+1)/m$, $\mu=\lambda$, and $\sigma\in\mathbb{R}$. Consequently, $\Sigma$ is a stationary hypersurface for $m\neq -1$ if and only if $\Sigma$ is a generalized Willmore hypersurface for above values of $p$ and $\mu$ and $\varsigma=\sigma/n^p$. 
\end{theorem}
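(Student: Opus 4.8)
The plan is to mirror the strategy of Theorem \ref{t1}, routing everything through the curvature characterization of critical curves for a general curvature energy $\mathbf{\Theta}[\gamma]=\int_\gamma P(\kappa)\,ds$ recorded in \cite[Prop. 3.3]{LP3}. As a preliminary step, exactly as in the lemma of Section \ref{sec4}, I would first argue that for a stationary cylindrical hypersurface with nonconstant mean curvature the rulings must be orthogonal to $e_{n+1}$: differentiating \eqref{ELE2} along each $w_i$ and using $m\neq 0$ together with the fact that $\kappa$ depends only on $s$ forces $\langle w_i,e_{n+1}\rangle=0$. After a rotation about $e_{n+1}$ this places $\gamma$ in the plane spanned by $\{e_n,e_{n+1}\}$, so that $x_{n+1}$ becomes a function of $s$ alone and \eqref{ELE2} reduces to \eqref{ele}.

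Next, since $\kappa$ is nonconstant I would invert it locally to regard $s$ as a function of $\kappa$ and apply the characterization to write the generating curve as $\gamma(s)=\frac{1}{\sqrt d}\bigl(-\int(\kappa\dot P-P)\,ds,\dot P\bigr)$ for some constant $d>0$, so that $x_{n+1}(s)=\dot P/\sqrt d$. Substituting this into \eqref{ele} yields the first-order relation $\eta\, d^{-m/2}\dot P^{\,m}=\kappa-\lambda$ for the unknown $P=P(\kappa)$. The decisive structural feature here, and the reason the constrained family $\mathbf{\Theta}_{p,\mu,\sigma}$ appears rather than the unconstrained one, is that \eqref{ele} involves only $x_{n+1}$ and not $\nu_{n+1}$; consequently the relation constrains $\dot P$ alone, in contrast to the linear homogeneous coupling of $P$ and $\dot P$ produced by \eqref{elf} in Theorem \ref{t1}. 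Integrating once gives $P(\kappa)=C(\kappa-\lambda)^{(m+1)/m}+\sigma$, where the additive constant of integration is a free real parameter. After normalizing the multiplicative constant $C$ this is exactly the integrand of $\mathbf{\Theta}_{p,\mu,\sigma}$ with $p=(m+1)/m$, $\mu=\lambda$, and arbitrary $\sigma\in\mathbb{R}$; hence $\kappa$ solves \eqref{ELcurves} and $\gamma$ is a generalized elastic curve. The hypothesis $m\neq-1$ is precisely what guarantees $(m+1)/m\neq 0$, so that the primitive is a genuine power rather than a logarithm.

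For the converse I would run the parameterization backwards: starting from a generating curve satisfying \eqref{ELcurves} for these parameters, \cite[Prop. 3.3]{LP3} gives $x_{n+1}=\dot P/\sqrt d$ with $P(\kappa)=(\kappa-\mu)^p+\sigma$, whence $\dot P=p(\kappa-\lambda)^{p-1}$ with $p-1=1/m$, and a direct computation recovers \eqref{ele} and therefore \eqref{ELE2} after matching the constant. Finally, the \emph{consequently} clause is immediate from Proposition \ref{relationprop}, which carries the relation $\sigma=n^p\varsigma$ into $\varsigma=\sigma/n^p$ and upgrades the statement about generating curves to one about the cylindrical hypersurfaces themselves.

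I expect the main obstacle to be bookkeeping rather than conceptual. One must track the multiplicative and additive constants carefully through the inversion $s\leftrightarrow\kappa$ and the single integration, verify the sign and domain conditions (for instance $\dot P=x_{n+1}\sqrt d>0$ on $\r^{n+1}_+$) so that the fractional powers $(\kappa-\lambda)^{1/m}$ are well defined, and confirm that the constant of integration genuinely ranges over all of $\mathbb{R}$. This last point is what distinguishes the present constrained conclusion from the unconstrained one of Theorem \ref{t1}, and it is the step I would check most carefully.
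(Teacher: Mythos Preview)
Your proposal is correct and follows essentially the same route as the paper: both arguments invoke \cite[Prop.~3.3]{LP3} to write $x_{n+1}=\dot P/\sqrt d$, substitute into \eqref{ele} to obtain $\eta\,\dot P^{\,m}=d^{m/2}(\kappa-\lambda)$, integrate once (using $m\neq-1$) to get $P(\kappa)=(\kappa-\lambda)^{(m+1)/m}+\sigma$ up to a multiplicative constant, and then appeal to Proposition~\ref{relationprop} for the hypersurface statement. Your explicit verification that the rulings are orthogonal to $e_{n+1}$ is a detail the paper simply takes for granted when writing $x_{n+1}$ as a function of $s$ alone, and your observation that \eqref{ele} constrains only $\dot P$ (forcing the additive constant $\sigma$ to appear) is exactly the paper's remark that $\sigma$ is a free constant of integration.
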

\begin{proof} 
We argue as in the proof of Theorem \ref{t1}. For the forward implication, it follows from the inverse function theorem that $x_{n+1}(s)$ is locally of the form $\dot{P}(\kappa)/\sqrt{d}$ for a suitable function $\dot{P}$ and constant $d>0$. Using this in combination with \eqref{ele}, we get that the function $P=P(\kappa)$  satisfies  the   ordinary differential equation 
$$\eta\dot{P}^m(\kappa)=d^{m/2}\left(\kappa-\lambda\right).$$
Since $m\neq -1$ (recall that if $m=0$ the hypersurface has constant mean curvature, which is not considered here), the solution of this equation is  
$$P(\kappa)=\left(\kappa-\lambda\right)^{\frac{m+1}{m}}+\sigma,$$
up to a multiplicative constant. We point out here that the energy parameter $\eta\neq 0$ as well as the constant $d>0$ only appear as part of the multiplicative constant and so, they are hidden in the constant of integration $\sigma\in\mathbb{R}$. However, since $\sigma\in\mathbb{R}$ is free, $\eta\neq 0$ and $d>0$ do not play any fundamental role in the relations between the energy parameters (see also Remark \ref{etadis}). Then $\gamma$ satisfies the Euler-Lagrange equation associated with $\mathbf{\Theta}_{p,\mu,\sigma}$ for $p=(m+1)/m$, $\mu=\lambda$, and $\sigma\in\mathbb{R}$. The converse follows from a direct computation. 

The second statement is a consequence of Proposition \ref{relationprop}.
\end{proof}

We now describe the analogous result for $m=-1$.
 
\begin{remark} Let $\Sigma$ be a cylindrical hypersurface with nonconstant mean curvature. Then $\Sigma$ is a stationary hypersurface for $m=-1$ if and only if its generating curve satisfies the Euler-Lagrange equation associated with the curvature energy
$$\widehat{\mathbf{\Theta}}_{\lambda,\sigma}[\gamma]=\int_\gamma\left(\log(\kappa-\lambda)+\sigma\right)ds,$$
where $\sigma\in\mathbb{R}$. Consequently, $\Sigma$ is a stationary hypersurface for $m=-1$ if and only if it satisfies the Euler-Lagrange equation associated with
$$\widehat{W}_{\lambda,\varsigma}[\Sigma]=\int_\Sigma \left(\log\left(H-\frac{\lambda}{n}\right)+\varsigma\right)dA,$$
for $\varsigma\in\mathbb{R}$.
\end{remark}

We next relate the generalized singular minimal hypersurfaces, which are the solutions of the Euler-Lagrange equation of the weighted area energies $F_{\alpha,\varpi}$, to the stationary hypersurfaces. As pointed out at the beginning of Section \ref{sec4}, the energies $F_{\alpha,\varpi}$ depend on two parameters while $E_{\eta,m,\lambda}$ is a three-parametric family. As a consequence, the relation will not be an equivalence in general. Indeed, if $\Sigma$ is a generalized singular minimal cylindrical hypersurface for $\alpha\neq -1$, then $\Sigma$ is a free generalized Willmore hypersurface, by Theorem \ref{t1}. In particular, free generalized Willmore hypersurfaces are also generalized Willmore hypersurfaces, but the converse is not true in general. We will then conclude, thanks to Theorem \ref{tnew}, that $\Sigma$ is a stationary hypersurface.

\begin{theorem}\label{t3} Let $\Sigma$ be a cylindrical hypersurface with nonconstant mean curvature. If $\Sigma$ is a generalized singular minimal hypersurface for $\alpha\neq -1$, then $\Sigma$ is also a stationary hypersurface for $m=-\alpha-1$ and $\lambda=\varpi/(\alpha+1)$.
\end{theorem}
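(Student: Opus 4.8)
The plan is to compose the two equivalences already established, Theorem \ref{t1} and Theorem \ref{tnew}, routing everything through the generating curve $\gamma$ and the class of generalized elastic curves, which by Proposition \ref{relationprop} is the common language for all three families of hypersurfaces (equivalently, one may phrase the same argument through the free-generalized-Willmore class, as in the paragraph preceding the statement). No new differential equation needs to be solved: the whole argument reduces to bookkeeping of the energy parameters together with a check that the relevant non-degeneracy conditions survive.

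First I would invoke Theorem \ref{t1}. Since $\Sigma$ is a generalized singular minimal hypersurface with $\alpha\neq -1$ and nonconstant mean curvature, its generating curve $\gamma$ is a \emph{free} generalized elastic curve, i.e.\ a generalized elastic curve with $\sigma=0$, for the parameters $p=\alpha/(\alpha+1)$ and $\mu=\varpi/(\alpha+1)$. The key elementary observation is that a free generalized elastic curve is in particular a generalized elastic curve for the admissible value $\sigma=0\in\r$, so $\gamma$ lies in the class that Theorem \ref{tnew} governs.

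Next I would apply the converse direction of Theorem \ref{tnew} to $\gamma$, solving for the stationary parameters $m$ and $\lambda$ that reproduce the same $p$ and $\mu$. From $p=(m+1)/m=\alpha/(\alpha+1)$ one gets $(m+1)(\alpha+1)=m\alpha$, hence $m+\alpha+1=0$, that is $m=-\alpha-1$; and from $\mu=\lambda$ one reads off $\lambda=\varpi/(\alpha+1)$, exactly the values in the statement. The value $\sigma=0$ is realized by an appropriate nonzero choice of $\eta$, which, as noted in the proof of Theorem \ref{tnew}, is absorbed into the integration constant, so no constraint on $\eta$ obstructs the conclusion.

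The only point requiring care, and the place where the hypotheses are actually used, is verifying that Theorem \ref{tnew} applies with these parameters. Theorem \ref{tnew} needs $m\neq -1$, which translates to $\alpha\neq 0$ and holds by the standing assumption $\alpha\neq 0$; the stationary setting requires $m\neq 0$, i.e.\ $\alpha\neq -1$, which is precisely the hypothesis of the theorem. With both checks in place, $\gamma$ is a generalized elastic curve of the type characterizing stationary cylindrical hypersurfaces, and Theorem \ref{tnew} yields that $\Sigma$ is a stationary hypersurface for $m=-\alpha-1$ and $\lambda=\varpi/(\alpha+1)$. Since the whole argument is a direct composition of prior equivalences, I expect no genuine obstacle beyond this parameter and non-degeneracy bookkeeping.
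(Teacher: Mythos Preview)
Your proof is correct and follows essentially the same route as the paper: compose Theorem \ref{t1} with Theorem \ref{tnew}, noting that the free case $\sigma=0$ (equivalently $\varsigma=0$) is a particular instance of the general case, and then solve $(m+1)/m=\alpha/(\alpha+1)$ and $\lambda=\mu$ for $m$ and $\lambda$. The only difference is cosmetic---you phrase the composition at the level of the generating curve while the paper phrases it at the hypersurface level via the generalized Willmore class---and you add the explicit non-degeneracy checks $m\neq -1$ and $m\neq 0$, which the paper leaves implicit.
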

\begin{proof}
Assume $\Sigma$ is a generalized singular minimal cylindrical hypersurface for $\alpha\neq -1$ with nonconstant mean curvature. From Theorem \ref{t1}, equivalently, $\Sigma$ is a free generalized Willmore hypersurface for $p=\alpha/(\alpha+1)$ and $\mu=\varpi/(\alpha+1)$. In other words, $\Sigma$ is a generalized Willmore hypersurface for $p=\alpha/(\alpha+1)$, $\mu=\varpi/(\alpha+1)$ and $\varsigma=0$.
 It follows from Theorem \ref{tnew} that $\Sigma$ is a stationary hypersurface for the values of $m$ and $\lambda$ given by
$$\frac{m+1}{m}=p=\frac{\alpha}{\alpha+1},\quad\quad\quad \lambda=\mu=\frac{\varpi}{\alpha+1}.$$
This finishes the proof.
\end{proof}

\begin{remark}\label{etadis} The converse of Theorem \ref{t3} is not true in general. Indeed, let $\Sigma$ be a stationary cylindrical hypersurface for $m\neq -1$. From the proof of Theorem \ref{tnew}, the generating curve is a generalized elastic curve where $p=(m+1)/m$, $\mu=\lambda$, and $\sigma\in\mathbb{R}$ is a constant of integration which cannot be determined only in terms of $\eta$, $m$ and $\lambda$.

Observe also that the parameter $\eta$ does not appear in the relations of Theorems \ref{tnew} and \ref{t3}. Indeed, it does not play any fundamental role in the results since it is hidden in the multiplicative constant arising to obtain $P(\kappa)$ and this multiplicative constant does not alter the corresponding Euler-Lagrange equation.
\end{remark}
  
The proof of Theorem \ref{t3} relies strongly on the variational characterization of the generating curves $\gamma$ as generalized elastic curves. This result is not only unexpected but also nontrivial because in order to obtain it, we need to go from a second order ordinary differential equation \eqref{elf} to a fourth order equation (of type \eqref{ELcurves}) and then, go back to the second order equation \eqref{ele}. However, a direct proof is expected to exist. The objective to finish this section is to show the result of Theorem \ref{t3} in a direct way. For it, we will only employ the generating curves of the cylindrical hypersurfaces.

We begin by obtaining an integral of the equation \eqref{ele}. If $\gamma$ is locally parameterized as a graph $\gamma(x)=\left(x,f(x)\right)$ for some function $f$, then the curvature of $\gamma$ is
$$\kappa(x)=\frac{{f}''(x)}{\left(1+{f'}^2(x)\right)^{3/2}}.$$
If $\gamma$ satisfies \eqref{ele}, then
$$\frac{f''}{\left(1+{f'}^2\right)^{3/2}}=\eta f^m+\lambda.$$
Multiplying  by $f'$, using that $m\not=-1$ and integrating, we have
\begin{equation}\label{integral}
-\frac{1}{\sqrt{1+{f'}^2}}=\frac{\eta}{m+1}f^{m+1}+\lambda f+c,
\end{equation}
where $c\in\r$ is a constant of integration. Consequently, $\gamma$ satisfies \eqref{ele} if and only if it can be locally parameterized as a graph for any solution of \eqref{integral}. We will use this first integral to obtain the conditions under which a generalized singular minimal cylindrical hypersurface is also stationary.

Assume that $\Sigma$ is a generalized singular minimal cylindrical hypersurface satisfying \eqref{ELF} for $\alpha\neq-1$. If the generating curve is locally parameterized by $\gamma(x)=\left(x,f(x)\right)$ for some positive function $f$, then $\gamma$ satisfies \eqref{elf}, that is,   
\begin{equation}\label{newintegral}
\kappa =\frac{\alpha}{f\sqrt{1+{f'}^2}}+\varpi
\end{equation}
because the function $\nu_{n+1}$ is 
$$\nu_{n+1}=\frac{1}{\sqrt{1+{f'}^2}}.$$
We now impose that $\gamma$ satisfies \eqref{ele}. If a solution of \eqref{newintegral} also satisfies \eqref{ele} we get
$$\frac{\alpha}{f\sqrt{1+{f'}^2}}+\varpi=\eta f^m+\lambda,$$
or, equivalently, 
$$\frac{1}{\sqrt{1+f'^2}}=\frac{\eta}{\alpha}f^{m+1}+\frac{\lambda-\varpi}{\alpha}f.$$
Observe that $\gamma$ satisfies \eqref{ele} if and only if $f$ is a solution of the first integral \eqref{integral}. Comparing \eqref{integral} with above identity, we deduce that it suffices to choose $c=0$,  $m=-\alpha-1$ and $\lambda=\varpi/(\alpha+1)$ so that a solution of \eqref{newintegral} also satisfies \eqref{ele}. This coincides with the statement of Theorem \ref{t3}.

\section{Stability analysis of stationary hypersurfaces}\label{sec6}

In this section we will investigate the stability of stationary hypersurfaces. These hypersurfaces are solutions of the Euler-Lagrange equation  \eqref{ELE2} associated with $E_{\eta,m,\lambda}$. More precisely, we will show Theorem \ref{t-stable}.  We will begin by showing that under mild hypothesis, compact stationary hypersurfaces have non-empty boundary. This is a  reasonable physical property because the case $m=1$ and $n=2$ corresponds with the model of a liquid drop resting on a horizontal plane. Since the liquid drop is supported on a plane, the air-liquid interface $\Sigma$ is a compact surface with non-empty boundary. In fact, the property that the boundary is non-empty is necessary for the existence of a liquid drop in equilibrium (\cite{kp}). This property can be generalized for many cases of the vertical potential energies $E_{\eta,m,\lambda}$. Note also that, intuitively, a realistic air-liquid interface should have no self-intersections.  

\begin{proposition} 
Assume that $m\in\mathbb{Z}$ is odd. If $\Sigma$ is a compact stationary hypersurface without self-intersections, then the boundary of $\Sigma$ is non-empty.
\end{proposition}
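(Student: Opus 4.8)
The plan is to argue by contradiction. Suppose the boundary of $\Sigma$ is empty, so that $\Sigma$ is a closed hypersurface. Since $\Sigma$ is compact and has no self-intersections, it is embedded and hence bounds a compact domain $\Omega\subset\r^{n+1}$; I take $\nu$ to be the outer unit normal along $\Sigma=\partial\Omega$. The strategy is to integrate the stationary equation \eqref{ELE2} against the vertical component $\nu_{n+1}$ of the normal and to convert each resulting boundary integral into a volume integral over $\Omega$, producing an identity whose sign is incompatible with $m$ being odd.

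Concretely, multiplying \eqref{ELE2} by $\nu_{n+1}$ and integrating over $\Sigma$ gives
\begin{equation}\label{mainid}
\int_\Sigma nH\,\nu_{n+1}\,dA=\eta\int_\Sigma x_{n+1}^m\,\nu_{n+1}\,dA+\lambda\int_\Sigma\nu_{n+1}\,dA.
\end{equation}
First I would evaluate the right-hand side by the divergence theorem. Applying it to the vector field $x_{n+1}^m\,e_{n+1}$, whose divergence is $m\,x_{n+1}^{m-1}$, yields $\int_\Sigma x_{n+1}^m\,\nu_{n+1}\,dA=m\int_\Omega x_{n+1}^{m-1}\,dV$, while applying it to the constant field $e_{n+1}$ gives $\int_\Sigma\nu_{n+1}\,dA=0$. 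Thus the right-hand side of \eqref{mainid} collapses to $\eta\,m\int_\Omega x_{n+1}^{m-1}\,dV$.

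The key step is to show that the left-hand side of \eqref{mainid} vanishes. For this I would use the classical fact that on a closed hypersurface the mean curvature vector $\vec H$ equals the Laplace--Beltrami operator applied componentwise to the position vector, so that $\int_\Sigma\vec H\,dA=0$ because $\int_\Sigma\Delta_\Sigma x_j\,dA=0$ for each coordinate function. Since $\vec H$ is $nH\,\nu$ up to sign, taking its $e_{n+1}$-component gives $\int_\Sigma nH\,\nu_{n+1}\,dA=0$; crucially, this uses precisely that $\partial\Sigma=\emptyset$. Combining with the previous paragraph, \eqref{mainid} reduces to $\eta\,m\int_\Omega x_{n+1}^{m-1}\,dV=0$, and since $\eta\neq 0$ and $m\neq 0$ I conclude $\int_\Omega x_{n+1}^{m-1}\,dV=0$.

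Finally I would invoke the parity hypothesis. Because $m$ is an odd integer, the exponent $m-1$ is even, so $x_{n+1}^{m-1}\geq 0$ wherever it is defined and is strictly positive off the measure-zero slice $\{x_{n+1}=0\}$. As $\Omega$ has positive volume, the integral $\int_\Omega x_{n+1}^{m-1}\,dV$ is strictly positive, contradicting the identity just obtained; hence $\partial\Sigma$ cannot be empty. I expect the main obstacle to be the clean justification of the vanishing of $\int_\Sigma nH\,\nu_{n+1}\,dA$, since this is where closedness is used essentially; the remaining manipulations are routine applications of the divergence theorem, and the oddness of $m$ enters only at the end to fix the sign of the volume integral.
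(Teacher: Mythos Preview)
Your proof is correct and follows essentially the same route as the paper: both argue by contradiction, use the identity $\Delta_\Sigma x_{n+1}=nH\nu_{n+1}$ on a closed hypersurface to obtain $\int_\Sigma nH\,\nu_{n+1}\,dA=0$, apply the ambient divergence theorem to $x_{n+1}^m e_{n+1}$ to convert $\int_\Sigma x_{n+1}^m\nu_{n+1}\,dA$ into $m\int_\Omega x_{n+1}^{m-1}\,dV$, and then use the evenness of $m-1$ for the contradiction. The only cosmetic difference is the order of presentation; your ``key step'' is exactly the paper's opening use of $\Delta x_{n+1}=nH\nu_{n+1}$.
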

\begin{proof}
It is well known that the $x_k$-coordinate functions of any hypersurface $\Sigma$ of $\r^{n+1}$ obey $\Delta x_k=nH\nu_k$, $1\leq k\leq n+1$, where $\Delta$ is the Laplacian on $\Sigma$. Let $k=n+1$. If $\Sigma$ is a stationary hypersurface, $\Delta x_{n+1}=(\eta x_{n+1}^m+\lambda)\nu_{n+1}$ holds from \eqref{ELE2}. By contradiction, assume that the boundary of $\Sigma$ is empty and so $\Sigma$ is a closed hypersurface.   The  divergence theorem  implies 
\begin{equation}\label{closed}
0=\int_\Sigma \Delta x_{n+1}\,dA=\int_\Sigma \left(\eta x_{n+1}^m+\lambda\right)\nu_{n+1}\, dA=\int_\Sigma \eta x_{n+1}^m\nu_{n+1}\, dA,
\end{equation}
because in any closed hypersurface, $\int_\Sigma \nu_k\, dA=0$ for all $k$. Since $\Sigma$ has no self-intersections, the Jordan-Brower Separation Theorem asserts that $\Sigma$ determines a bounded domain $\Omega$ in $\r^{n+1}$ whose boundary is $\Sigma$. On the closure $\overline{\Omega}$ of $\Omega$, define the vector field $Z=\eta x_{n+1}^me_{n+1}=(0,\ldots,0,\eta x_{n+1}^m)$. The Euclidean divergence of $Z$ is $\mbox{Div}_{\r^{n+1}}Z=m\eta x_{n+1}^{m-1}$ and the divergence theorem and \eqref{closed} imply
\begin{equation*}
m\eta\int_\Omega x_{n+1}^{m-1}\, dV=\int_\Omega\mbox{Div}_{\r^{n+1}}Z\, dV=\int_\Sigma\langle Z,\nu\rangle\, dA=\int_\Sigma \eta x_{n+1}^m\nu_{n+1}\, dA=0.
\end{equation*}
However, $m\eta\int_\Omega x_{n+1}^{m-1}\, dV\not=0$ because $m-1\in\mathbb{Z}$ is even and $\eta\neq 0$. This contradiction proves the result.
\end{proof}

In what follows, we will analyze the stability of stationary hypersurfaces. For this, it will be necessary to have an expression of  the second variation formula of  $E_{\eta,m,\lambda}$. The first variation of  $E_{\eta,m,\lambda}$   for all compactly supported variations is
$$E_{\eta,m,\lambda}'(0)[u]=-\int_\Sigma\left(nH-(\eta x_{n+1}^m+\lambda)\right)u\,dA,$$
where $u\in\mathcal{C}_0^\infty(\Sigma)$ is the normal component of the velocity vector associated to the variation. As expected,  the Euler-Lagrange equation \eqref{ELE2} is compatible with the above expression $E_{\eta,m,\lambda}'(0)[u]$. The derivation of the formula for the second order variation is obtained from standard methods. Following for example  \cite{we}, we deduce  
$$E_{\eta,m,\lambda}''(0)[u]=-\int_\Sigma u\cdot L[u]\,dA,$$
where $L$ is the Jacobi operator defined by
\begin{equation*}\label{L}
L[u]=\Delta u+\left(\lvert A\rvert^2-m\eta x_{n+1}^{m-1}\nu_{n+1}\right)u,
\end{equation*}
and $\lvert A\rvert^2$ is the square of the norm of the second fundamental form of $\Sigma$. A stationary hypersurface $\Sigma$ is \emph{stable} if $E_{\eta,m,\lambda}''(0)[u]\geq 0$ for all $u\in\mathcal{C}^\infty_0(\Sigma)$. Since the operator $L$ is elliptic, standard theory for eigenvalues asserts that the stability of the hypersurface is equivalent to $\Sigma$ having Morse index zero.

The proof of Theorem \ref{t-stable} requires   the computation of $L[\nu_{n+1}]$.

\begin{proposition} Let $\Sigma$ be a stationary hypersurface of $\mathbb{R}^{n+1}$. Then
\begin{equation}\label{nn}
L[\nu_{n+1}]=-m\eta x_{n+1}^{m-1}.
\end{equation}
\end{proposition}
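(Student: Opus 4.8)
The plan is to evaluate the Jacobi operator on $\nu_{n+1}$ directly, so the only genuine ingredient is the Laplacian of the Gauss map. First I would recall the classical identity for a hypersurface $\Sigma\subset\r^{n+1}$, obtained by differentiating the Weingarten relation $\overline{\nabla}_{e_i}\nu=-\sum_j h_{ij}e_j$ twice in an orthonormal frame geodesic at the point and applying the Codazzi equations to symmetrize $\nabla h$. This yields
\[\Delta\nu=-\lvert A\rvert^2\,\nu-n\,\nabla^\Sigma H,\]
where $\nabla^\Sigma H$ denotes the tangential gradient of the mean curvature, viewed as a vector field along $\Sigma$. Pairing with the constant vector $e_{n+1}$ gives the scalar identity
\[\Delta\nu_{n+1}=-\lvert A\rvert^2\,\nu_{n+1}-n\langle\nabla^\Sigma H,e_{n+1}\rangle,\]
which isolates the single term that must be computed using the stationarity hypothesis.

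Next I would use equation \eqref{ELE2} in the form $nH=\eta x_{n+1}^m+\lambda$ to express the gradient term. Differentiating tangentially gives $n\nabla^\Sigma H=m\eta x_{n+1}^{m-1}\,\nabla^\Sigma x_{n+1}$, and since $x_{n+1}=\langle\vec{x},e_{n+1}\rangle$ is the restriction of a linear function, its tangential gradient is the tangential projection of $e_{n+1}$, namely $\nabla^\Sigma x_{n+1}=e_{n+1}-\nu_{n+1}\,\nu$. Consequently
\[n\langle\nabla^\Sigma H,e_{n+1}\rangle=m\eta x_{n+1}^{m-1}\langle e_{n+1}-\nu_{n+1}\nu,e_{n+1}\rangle=m\eta x_{n+1}^{m-1}\bigl(1-\nu_{n+1}^2\bigr).\]
Substituting this into the previous display produces a closed expression for $\Delta\nu_{n+1}$ in terms of $\lvert A\rvert^2$, $\nu_{n+1}$ and $x_{n+1}$ alone.

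Finally I would substitute into the definition of $L$. Writing
\[L[\nu_{n+1}]=\Delta\nu_{n+1}+\bigl(\lvert A\rvert^2-m\eta x_{n+1}^{m-1}\nu_{n+1}\bigr)\nu_{n+1},\]
the two $\lvert A\rvert^2\nu_{n+1}$ contributions cancel, and collecting the remaining terms gives $-m\eta x_{n+1}^{m-1}(1-\nu_{n+1}^2)-m\eta x_{n+1}^{m-1}\nu_{n+1}^2=-m\eta x_{n+1}^{m-1}$, which is exactly \eqref{nn}. The computation is elementary once the Gauss map identity is in place; accordingly, the main obstacle is essentially bookkeeping — fixing the sign conventions for the second fundamental form so that $\mathrm{tr}\,A=nH$ is consistent with the stated relation $\Delta x_k=nH\nu_k$, and checking that the Codazzi symmetrization indeed converts $\sum_i\nabla_{e_i}h_{ij}$ into $n\,\nabla^\Sigma_{e_j}H$. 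No genuine analytic difficulty arises, since all objects are smooth and the manipulations are pointwise.
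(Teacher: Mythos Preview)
Your proof is correct and follows essentially the same route as the paper: both invoke the classical identity $\Delta\nu+\lvert A\rvert^2\nu=-n\nabla^\Sigma H$, use the stationarity equation \eqref{ELE2} together with $\nabla^\Sigma x_{n+1}=e_{n+1}-\nu_{n+1}\nu$ to express the right-hand side, project against $e_{n+1}$, and then substitute into $L$ to obtain the cancellation. Your write-up is somewhat more detailed in justifying the Gauss map identity via Codazzi, but the argument is the same.
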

\begin{proof}
It is known that in any hypersurface $\Sigma$ of $\r^{n+1}$, the Laplacian $\Delta\nu$ of  the Gauss map $\nu$ involves the gradient of the mean curvature vector field $H$ by means of the equation
$$\Delta\nu+\lvert A\rvert^2\nu=-\nabla(nH).$$
By using  \eqref{ELE2}, we have 
$$\Delta\nu+\lvert A\rvert^2\nu=-\nabla(\eta x_{n+1}^m+\mu)=-m\eta x_{n+1}^{m-1}\left(e_{n+1}-\nu_{n+1}\nu\right).$$
Multiplying   by $e_{n+1}$ with  the Euclidean metric, we have
$$\Delta\nu_{n+1}+\lvert A\rvert^2\nu_{n+1}=-m\eta x_{n+1}^{m-1}\left(1-\nu_{n+1}^2\right),$$
from which the result follows.
\end{proof}

We now prove Theorem \ref{t-stable}. The proof is split in two sub-theorems according to the two assertions that appear in its statement. 

\begin{theorem}\label{t-stable2} Let $\Sigma$ be a compact stationary graph over the horizontal hyperplane $x_{n+1}=0$ for $m>0$. If $\eta\nu_{n+1}>0$ holds on $\Sigma$, then $\Sigma$ is stable.  
\end{theorem}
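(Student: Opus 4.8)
The plan is to show directly that the second variation is nonnegative. Writing $q=\lvert A\rvert^2-m\eta x_{n+1}^{m-1}\nu_{n+1}$ for the potential of the Jacobi operator and integrating $-\int_\Sigma u\,\Delta u\,dA$ by parts (the boundary terms vanish because $u$ has compact support), the stability quadratic form becomes
$$E_{\eta,m,\lambda}''(0)[u]=-\int_\Sigma u\,L[u]\,dA=\int_\Sigma\left(\lvert\nabla u\rvert^2-q\,u^2\right)dA=:Q[u],$$
and the goal is to prove $Q[u]\geq 0$ for every $u\in\mathcal{C}_0^\infty(\Sigma)$. The strategy is the classical one of producing a strictly positive supersolution of the Jacobi equation, that is, a function $\phi>0$ on $\Sigma$ with $L[\phi]\leq 0$, and using it as a comparison function.

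The candidate for $\phi$ is supplied by the preceding Proposition. Since $L[\nu_{n+1}]=-m\eta x_{n+1}^{m-1}$ and $L$ is linear, I would set $\phi=\eta\,\nu_{n+1}$, so that $L[\phi]=\eta\,L[\nu_{n+1}]=-m\eta^2 x_{n+1}^{m-1}$. Two features of the hypotheses make this choice effective. First, because $\Sigma$ is a graph over $x_{n+1}=0$, its vertical normal component $\nu_{n+1}$ never vanishes and has constant sign; combined with $\eta\nu_{n+1}>0$ this gives $\phi>0$ on the whole of the compact $\Sigma$, so $\phi$ is bounded below by a positive constant on the support of any test function. Second, on the interior one has $x_{n+1}>0$, and $m>0$ then forces $x_{n+1}^{m-1}>0$, whence $L[\phi]=-m\eta^2 x_{n+1}^{m-1}<0$ on the support of every $u\in\mathcal{C}_0^\infty(\Sigma)$.

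With $\phi$ so chosen I would substitute $u=\phi\,v$, which is legitimate since $v=u/\phi\in\mathcal{C}_0^\infty(\Sigma)$. Expanding $\lvert\nabla u\rvert^2$ and integrating the cross term by parts, using $\tfrac1\phi\langle\nabla(u^2),\nabla\phi\rangle=\langle\nabla(u^2),\nabla\log\phi\rangle$ together with $\Delta\log\phi=\tfrac{\Delta\phi}{\phi}-\tfrac{\lvert\nabla\phi\rvert^2}{\phi^2}$, yields the identity
$$\int_\Sigma\phi^2\lvert\nabla v\rvert^2\,dA=\int_\Sigma\lvert\nabla u\rvert^2\,dA+\int_\Sigma\frac{\Delta\phi}{\phi}\,u^2\,dA.$$
Inserting this into $Q[u]$ and using $\tfrac{\Delta\phi}{\phi}+q=\tfrac{\Delta\phi+q\phi}{\phi}=\tfrac{L[\phi]}{\phi}$ gives
$$Q[u]=\int_\Sigma\phi^2\lvert\nabla v\rvert^2\,dA-\int_\Sigma\frac{L[\phi]}{\phi}\,u^2\,dA.$$
Since $\phi>0$ and $L[\phi]\leq 0$ on the support of $u$, both integrands are nonnegative, so $Q[u]\geq 0$ and $\Sigma$ is stable.

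The main obstacle, and the place where the graph hypothesis is indispensable, is the legitimacy of the substitution $u=\phi\,v$: one needs $\phi=\eta\nu_{n+1}$ to be a smooth function bounded away from zero, which is guaranteed precisely because a graph has $\nu_{n+1}\neq 0$ with constant sign. Everything else reduces to the integration by parts above. I would also note that $x_{n+1}^{m-1}$ needs to be controlled only on the support of $u$, which lies in the interior where $x_{n+1}>0$; hence no difficulty arises even if part of the boundary of $\Sigma$ lies on the supporting hyperplane $x_{n+1}=0$.
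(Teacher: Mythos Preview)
Your proof is correct and follows essentially the same approach as the paper: both are the Fischer--Colbrie--Schoen argument, exploiting that $\nu_{n+1}$ is a positive (up to sign) supersolution of the Jacobi equation via $L[\nu_{n+1}]=-m\eta x_{n+1}^{m-1}$, and then performing the substitution $u=\phi v$ with $\phi$ a constant multiple of $\nu_{n+1}$. The only cosmetic differences are that the paper works with $\nu_{n+1}$ itself (writing $u=w\nu_{n+1}$) and expands $L[w\nu_{n+1}]$ directly before integrating by parts, whereas you scale by $\eta$ to make $\phi>0$ automatic and package the computation as the abstract identity $Q[u]=\int\phi^2|\nabla v|^2-\int\tfrac{L[\phi]}{\phi}u^2$; the resulting final formula is identical.
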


\begin{proof} 
 From the expression of $E_{\eta,m,\lambda}''(0)[u]$ and \eqref{nn}, we have 
\begin{equation*}
E_{\eta,m,\lambda}''(0)[\nu_{n+1}]=-\int_\Sigma \nu_{n+1}\cdot L[\nu_{n+1}]\,dA=m\int_\Sigma \eta \nu_{n+1}x_{n+1}^{m-1}\,dA>0,
\end{equation*}
since $\eta\nu_{n+1}>0$ and $m>0$. Now the proof follows an argument due to Fischer-Colbrie and Schoen  in the theory of stable minimal surfaces (\cite{fs}). For the sake of completeness, we give the proof here. Let  $u\in\mathcal{C}_0^\infty(\Sigma)$ be an arbitrary function. Since $\nu_{n+1}\not=0$, then $u=w\nu_{n+1}$ for a certain function $w\in\mathcal{C}_0^\infty(\Sigma)$. It follows that
$$\Delta u=\Delta\left(w\nu_{n+1}\right)=w\Delta \nu_{n+1}+\nu_{n+1}\Delta w+2\langle\nabla w,\nabla \nu_{n+1}\rangle.$$
Using this identity in the expression of the Jacobi operator $L$, we obtain
\begin{eqnarray*}
L[w\nu_{n+1}]&=&\Delta(w\nu_{n+1})+\left(\lvert A\rvert^2-m\eta x_{n+1}^{m-1}\nu_{n+1}\right)w\nu_{n+1}\\&=&w\cdot L[\nu_{n+1}]+\nu_{n+1}\Delta w+2\langle \nabla w,\nabla \nu_{n+1}\rangle\\&=&-m\eta x_{n+1}^{m-1}w+\nu_{n+1}\Delta w+2\langle \nabla w,\nabla\nu_{n+1}\rangle,
\end{eqnarray*}
where in the last equality the formula \eqref{nn} is used again. Now we insert $u=w e_{n+1}$ in the formula of the second variation of $E_{\eta,m,\lambda}$, obtaining 
\begin{eqnarray*}
E_{\eta,m,\lambda}''(0)[u]&=&-\int_\Sigma w\nu_{n+1}\cdot L[w\nu_{n+1}]\, dA=m\int_\Sigma \eta\nu_{n+1} x_{n+1}^{m-1} w^2\,dA\\&&-\int_\Sigma w\nu_{n+1}^2\Delta w\,dA-\int_\Sigma 2w\nu_{n+1}\langle\nabla w,\nabla \nu_{n+1}\rangle\,dA.
\end{eqnarray*}
The integral in the last line can be rewritten using the  divergence theorem as
\begin{eqnarray*}
\int_\Sigma w\nu_{n+1}^2\Delta w \, dA &=&-\int_\Sigma\langle\nabla (w \nu_{n+1}^2),\nabla w\rangle \, dA\\&=&-\int_\Sigma \nu_{n+1}^2\lvert\nabla w\rvert^2\, dA-2\int_\Sigma w \nu_{n+1}\langle \nabla w,\nabla \nu_{n+1} \rangle \, dA.
\end{eqnarray*}
Putting this identity in $E_{\eta,m,\lambda}''(0)[u]$, we conclude that
$$E_{\eta,m,\lambda}''(0)[u]=\int_\Sigma \left(m\eta\nu_{n+1}x_{n+1}^{m-1} w^2+\nu_{n+1}^2 \lvert\nabla w\rvert^2\right) dA \geq 0,$$
since $\eta\nu_{n+1}> 0$ and $m> 0$. This proves the result.
\end{proof}

We next prove that graphs are minimizers in the class of all graphs with the same boundary.

\begin{theorem}\label{t-stable3}  Let $\Sigma$ be a compact stationary graph over the horizontal hyperplane $x_{n+1}=0$ for $m>0$. If $\eta\nu_{n+1}>0$ holds on $\Sigma$, then $\Sigma$ is a minimizer of the energy $E_{\eta,m,\lambda}$ in the class of all graphs over $x_{n+1}=0$ and with the same boundary as $\Sigma$.
\end{theorem}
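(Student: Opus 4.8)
The plan is to reduce this geometric minimization to a convexity statement for a scalar functional on the space of graphs. First I would represent each competitor as a graph $\Sigma_u=\{(x,u(x)):x\in D\}$ over a fixed bounded domain $D\subset\r^n$ in the hyperplane $x_{n+1}=0$, with $u>0$ and $u|_{\partial D}$ equal to the common boundary height shared with $\Sigma$. Writing $\Omega_u=\{(x,z):0<z<u(x)\}$ and applying Fubini to the three terms of $E_{\eta,m,\lambda}$, the area becomes $\int_D\sqrt{1+\lvert\nabla u\rvert^2}\,dx$, while the two solid integrals reduce to $\frac{\eta}{m+1}\int_D u^{m+1}\,dx$ and $\lambda\int_D u\,dx$. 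Thus
$$E_{\eta,m,\lambda}[\Sigma_u]=\int_D F(u,\nabla u)\,dx,\qquad F(t,\xi)=\sqrt{1+\lvert\xi\rvert^2}+\frac{\eta}{m+1}\,t^{m+1}+\lambda t.$$

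The key observation is that $F$ is jointly convex in $(t,\xi)$ on $\{t>0\}\times\r^n$. The gradient part $\sqrt{1+\lvert\xi\rvert^2}$ is convex in $\xi$ and independent of $t$, while the potential part $G(t)=\frac{\eta}{m+1}t^{m+1}+\lambda t$ has $G''(t)=\eta m\,t^{m-1}$, which is positive for $t>0$ precisely because $m>0$ and, taking the upward (outward) orientation so that $\nu_{n+1}>0$, the hypothesis $\eta\nu_{n+1}>0$ forces $\eta>0$. A sum of a function convex in $\xi$ alone and a function convex in $t$ alone is jointly convex, so $F$ is convex on the relevant region.

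With convexity established, I would invoke the classical principle that a critical point of a convex variational integral minimizes globally among competitors with the same boundary values. Concretely, for any admissible $v$ the tangent-plane inequality for the convex integrand yields
$$E_{\eta,m,\lambda}[\Sigma_v]-E_{\eta,m,\lambda}[\Sigma_u]\geq\int_D\left(G'(u)(v-u)+\frac{\nabla u}{\sqrt{1+\lvert\nabla u\rvert^2}}\cdot\nabla(v-u)\right)dx.$$
Integrating the gradient term by parts and using $v=u$ on $\partial D$ to annihilate the boundary contribution, the right-hand side becomes $\int_D\bigl(G'(u)-\mathrm{div}\,\tfrac{\nabla u}{\sqrt{1+\lvert\nabla u\rvert^2}}\bigr)(v-u)\,dx$. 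Since $\Sigma_u$ is stationary it satisfies \eqref{ELE2}, which for a graph with the chosen orientation reads $\mathrm{div}\,\tfrac{\nabla u}{\sqrt{1+\lvert\nabla u\rvert^2}}=nH=\eta u^m+\lambda=G'(u)$; hence the integrand vanishes identically and $E_{\eta,m,\lambda}[\Sigma_v]\geq E_{\eta,m,\lambda}[\Sigma_u]$, which is the asserted minimality.

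The main obstacle I anticipate is the orientation bookkeeping: one must verify that the divergence-form mean curvature of the graph and the sign convention in \eqref{ELE2} agree, so that the Euler--Lagrange equation of the reduced functional $\int_D F\,dx$ is exactly the stationarity condition. Once the normal is fixed as the outward normal of $\Omega$ (i.e.\ upward for a graph, giving $\nu_{n+1}>0$ and hence $\eta>0$), the first variation of $\int_D F\,dx$ matches $E_{\eta,m,\lambda}'(0)$ evaluated on vertical variations $w=\phi\,\nu_{n+1}$, and everything lines up. A minor technical point is ensuring $u>0$ throughout $D$, so that $G''\geq 0$ genuinely holds and the powers $u^{m+1},u^{m}$ are well defined for non-integer $m$; this is automatic because the graph lies in $\r^{n+1}_+$ and shares its positive boundary data with $\Sigma$.
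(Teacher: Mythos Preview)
Your proof is correct and reaches the same conclusion, but by a genuinely different route from the paper. The paper builds a calibration: it extends the unit normal of the stationary graph $f$ to a vector field $X$ on $U\times\r$, couples it with $Y=\bigl(\tfrac{\eta}{m+1}x_{n+1}^{m+1}+\lambda x_{n+1}\bigr)e_{n+1}$, and applies the divergence theorem to $Z=X+Y$ over the chain between $\Sigma$ and a competitor $\widetilde{\Sigma}$; the inequality $\langle\nu,\widetilde{\nu}\rangle\leq 1$ gives the area comparison, and a separate sign analysis on the components of the chain handles the residual term $\int_{\mathcal{O}}\eta(x_{n+1}^m-f^m)\,dV$. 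Your approach instead rewrites $E_{\eta,m,\lambda}$ on graphs as $\int_D F(u,\nabla u)\,dx$ with $F(t,\xi)=\sqrt{1+|\xi|^2}+\tfrac{\eta}{m+1}t^{m+1}+\lambda t$, observes that $\eta\nu_{n+1}>0$ with the upward normal forces $\eta>0$ so that $F$ is jointly convex on $\{t>0\}\times\r^n$, and then applies the standard ``convex integrand $\Rightarrow$ critical points are global minimizers'' argument via the supporting-hyperplane inequality and one integration by parts. Your route is more elementary and transparent in this graph-restricted setting, and it sidesteps the orientation bookkeeping on the chain $\mathcal{O}$; the paper's calibration viewpoint is more geometric and, in principle, is the template one would extend to non-graph competitors. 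The one assumption you flag---positivity of the competitor $v$---is indeed used implicitly in the paper as well (their $\Omega$ and the analogous claim for $\widetilde{\Sigma}$ both presuppose $g>0$), so you are not imposing anything extra.
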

\begin{proof} 
Let $\Sigma$ be the graph $x_{n+1}=f(x_1,...,x_n)$ for a suitable function $f$ defined on a domain $U\subset\mathbb{R}^n$. Without loss of generality, we can assume that the unit normal vector to $\Sigma$ is 
$$\nu=\frac{1}{\sqrt{1+\lvert Df\rvert^2}}\left(-Df,1\right).$$
On the domain $U\times\r\subset\r^{n+1}$, define a vector field $X$ by translations of $\nu$ along the $x_{n+1}$-axis, that is, 
$$X(x_1,\ldots,x_{n+1})=\nu(x_1,\ldots,x_{n}).$$
Then
$$\mbox{Div}_{\r^{n+1}}(X)=-\mbox{Div}_{\r^n}\left(\frac{Df}{\sqrt{1+\lvert Df\rvert^2}}\right)=-nH=-\eta f^m-\lambda,$$
where the second equality is the expression of the mean curvature in non-parametric form and the last  equality is just the Euler-Lagrange equation \eqref{ELE2}.

Define on $U\times\r$, the vector field $Z=X+Y$, where $Y$ is given by  
\begin{equation}\label{yy}
Y=\left(\frac{\eta}{m+1}x_{n+1}^{m+1}+\lambda x_{n+1}\right)e_{n+1}.
\end{equation}
Notice that $m\not=-1$ since  $m> 0$. The divergence of $Y$ is 
$$\mbox{Div}_{\r^{n+1}}(Y)=\eta x_{n+1}^m+\lambda.$$
Thus 
$$\mbox{Div}_{\r^{n+1}}(Z)=\mbox{Div}_{\r^{n+1}}(X)+\eta x_{n+1}^m+\lambda=\eta\left(x_{n+1}^m-f^m\right).$$
We will prove that $\Sigma$ has less energy than any other graph with the same boundary. Let $x_{n+1}=g(x_1,...,x_n)$ be a graph $\widetilde{\Sigma}$ over $U$ with $f=g$ along $\partial\Sigma$. Let $\mathcal{O}$ be the oriented $(n+1)$-chain that determine $\Sigma\cup \widetilde{\Sigma}$ and denote by $\widetilde{\nu}$ the unit normal vector on $\widetilde{\Sigma}$ and compatible with the orientation of $\mathcal{O}$. The divergence theorem yields
\begin{eqnarray}\label{e3}
\int_\mathcal{O} \eta\left(x_{n+1}^m-f^m\right)\, dV &=&\int_\mathcal{O}\mbox{Div}_{\r^{n+1}}(Z)\, dV=\int_\Sigma\langle Z,\nu\rangle\, dA-\int_{\widetilde{\Sigma}}\langle Z,\widetilde{\nu}\rangle\, d\widetilde{A}\nonumber\\
&=&\int_\Sigma\left(1+\langle Y,\nu\rangle\right)\, dA-\int_{\widetilde{\Sigma}}\left(\langle\nu,\widetilde{\nu}\rangle+\langle Y,\widetilde{\nu}\rangle\right)d\widetilde{A}\nonumber\\
&\geq& \int_\Sigma\left(1+\langle Y,\nu\rangle\right)\, dA-\int_{\widetilde{\Sigma}}\left(1+\langle Y,\widetilde{\nu}\rangle\right)d\widetilde{A},
\end{eqnarray}
because $\langle\nu,\widetilde{\nu}\rangle\leq 1$. 

We claim that
$$\int_\Sigma \left(1+\langle Y,\nu\rangle\right)\, dA= E_{\eta,m,\lambda}[\Sigma],\quad \int_{\widetilde{\Sigma}}\left(1+\langle Y,\widetilde{\nu}\rangle\right)d\widetilde{A}=E_{\eta,m,\lambda}[\widetilde{\Sigma}].$$
We will check the claim only for $\Sigma$ since the arguments for $\widetilde{\Sigma}$ are analogous. 

Observe that the first term in the integrand, $\int_\Sigma 1\, dA$, measures the area of $\Sigma$. Consequently, comparing with the expression of the energy $E_{\eta,m,\lambda}$, it suffices to check the identity
\begin{equation}\label{vo1}
\int_\Omega \left(\eta x_{n+1}^m+\lambda\right)dV = \int_\Sigma\langle Y,\nu\rangle\,dA.
\end{equation}
Here 
$$\Omega=\{(x_1,\ldots,x_{n+1})\in\r^{n+1}:0<x_{n+1}<f(x_1,\ldots,x_n), (x_1,\ldots,x_n)\in U\}.$$
Since $\Omega\subset U\times\r$, the divergence theorem for the vector field $Y$ gives
$$\int_\Omega\mbox{Div}_{\r^{n+1}}(Y)\, dV=\int_\Omega\left(\eta x_{n+1}^m+\lambda\right)dV=\int_{\partial\Omega}\langle Y,N\rangle\,dA,$$
where $N$ is the unit normal vector on $\partial\Omega$. Notice that $\partial\Omega$ is composed by the hypersurface $\Sigma$,  the domain $U\times\{0\}$ in the hyperplane  $x_{n+1}=0$ and the vertical walls of $\Omega$
$$\{(x_1,\ldots,x_{n+1})\in\r^{n+1}: (x_1,\ldots,x_n)\in\partial U, 0\leq x_{n+1}\leq f(x_1,\ldots,x_n)\}.$$
 On the vertical walls of $\Omega$, $\langle Y,N\rangle=0$ because of the orthogonality between $N$ and $e_{n+1}$. On the other hand, the vector field $Y$ vanishes on $U\times\{0\}$ since $x_{n+1}=0$ and $m>0$.  Therefore $\int_{\partial\Omega}\langle Y,N\rangle\, dA=\int_\Sigma\langle Y,\nu\rangle\, dA$, proving \eqref{vo1}.  

Once the claim is proven, the inequality \eqref{e3} becomes
\begin{equation}\label{vo2}
\int_\mathcal{O} \eta\left(x_{n+1}^m-f^m\right)dV\geq E_{\eta,m,\lambda}[\Sigma]- E_{\eta,m,\lambda}[\widetilde{\Sigma}].
\end{equation}
Finally, Theorem \ref{t-stable2} is proven if we show that the left hand-side of \eqref{vo2} is nonpositive. Since $\eta>0$ and $m>0$, it is enough to prove that $x_{n+1}-f\leq 0$ holds in $\mathcal{O}$.   The chain $\mathcal{O}$ has different components where, from the divergence theorem, $dV=dx_1\ldots dx_{n+1}$ if $\nu$ points outwards of the component and $dV=-dx_1\ldots d x_{n+1}$ otherwise. In the first case, $f\geq x_{n+1}$ in the component, while in the second case $f\leq x_{n+1}$. This finishes the proof. 
\end{proof}
 
\begin{remark} The arguments in Theorem \ref{t-stable3} are based in calibrations of the theory of minimal hypersurfaces (\cite{mo1,mo2}). Since $\Sigma$ is a graph,  it is possible to define the differential $n$-form 
$$\omega(w_1,\ldots,w_{n})=\mbox{det}(w_1,\ldots,w_n,\nu)$$ 
in the domain $U\times\r$. This differential form is not closed, but it satisfies the following three conditions:
\begin{enumerate}
\item  The differential $d\omega$ is 
\begin{equation*}
d\omega=\mbox{div}_{\r^{n+1}}\left(X\right) dx_1\wedge\ldots\wedge dx_{n+1}=-\left(\eta f^m+\lambda\right) dx_1\wedge\ldots\wedge dx_{n+1}.
\end{equation*}
\item $\omega(w_1,\ldots,w_n) \leq 1$, in the set of all $n$-dimensional orthonormal frames\\ $\{w_1,\ldots,w_n\}$ of the tangent space of $U\times\r$. 
\item $\omega(w_1,\ldots,w_n)=1$ for any positive orthonormal basis on the tangent space of $\Sigma$. 
\end{enumerate}
After a modification of $\omega$ by means of the vector field $Z$, we conclude  the result of Theorem \ref{t-stable3}.
\end{remark}

\section{Concluding remarks}

The main goal of this paper was to establish relations between the critical points of the Willmore-type energies $W_{p,\mu,\varsigma}$, the weighted area functionals $F_{\alpha,\varpi}$, and the vertical potential energies $E_{\eta,m,\lambda}$. We focused on those hypersurfaces with nonconstant mean curvature of cylindrical type. These hypersurfaces have the peculiarity that any statement defined regarding them is carried out to the generating curve. For example, the Euler-Lagrange equations of the three families of energies reduce to ordinary differential equations, which correspond with the Euler-Lagrange equations of energy functionals, acting on planar curves, involving powers of the curvature of the curve. The objective has been achieved, proving an equivalence between free generalized Willmore hypersurfaces and generalized singular minimal hypersurfaces. It has also been shown that stationary hypersurfaces are in a one-to-one relation with generalized Willmore hypersurfaces. As a consequence, we concluded that any generalized singular minimal hypersurface is also a stationary hypersurface. 
 
A relevant question to investigate was whether these hypersurfaces are minimizers of the energies or not. Critical points that are also minimizers correspond with realistic equilibria in physics. In the last part of the paper we have investigated the stability of stationary graphs on $x_{n+1}=0$. If the critical point is a cylindrical hypersurface, the property of being a graph on $x_{n+1}=0$ is the same as saying that $\gamma$ is a graph on $x_{n+1}=0$.  Thus $\gamma$ can be subdivided in pieces all of which are graphs on $x_{n+1}=0$. Moreover, the component $\nu_{n+1}$ of the unit normal to $\Sigma$ coincides with the one of its generating curves. These pieces of $\gamma$ determine sub-cylindrical hypersurfaces of the initial hypersurface that are graphs on strips of $x_{n+1}=0$. Thus, the stability condition in Theorem \ref{t-stable} reduces to discerning if $\eta\nu_{n+1}$ is positive on each of the pieces of $\gamma$. A description of the generating curves of cylindrical stationary hypersurfaces appears in \cite{lo0} for the case $m=1$ and in \cite{LP3} for arbitrary $m$.

In a future work, it would be interesting to study a similar problem for other weighted energies. A family of densities with geometric interest are those related with the flow by the mean curvature. The investigation of the flow by the mean curvature is a topic of great activity in geometric analysis since the works of Huisken and Ilmanen (\cite{hu,il}). In this theory, the solitons by the flow are characterized in terms of their mean curvature. As critical points of a variational problem, they have zero weighted mean curvature. Among these solitons, the self-shrinkers and expanders deserve to be highlighted. It would be interesting to examine if the generating curves of cylindrical hypersurfaces are critical points for some energy functionals depending on the curvature of the curve, as is the case for the energies studied in this paper. If such a relation was to exist, it would show once again the important role of the theory of elastic curves initiated by Euler and Bernoulli in unrelated areas of mathematics.

\section*{Acknowledgements}
Rafael  L\'opez  is a member of the Institute of Mathematics  of the University of Granada. This work  has been partially supported by  the Projects  I+D+i PID2020-117868GB-I00, supported by MCIN/AEI/10.13039/501100011033/,  A-FQM-139-UGR18 and P18-FR-4049. \'Alvaro P\'ampano has been partially supported by the AMS-Simons Travel Grants Program 2021-2022. He would like to thank the Department of Geometry and Topology of University of Granada for its warm hospitality. The authors would also like to thank the referee for carefully reviewing the paper.

\end{document}